\theoremstyle{plain}
\newtheorem{theorem}{Theorem}[section]
\newtheorem{corollary}[theorem]{Corollary}
\newtheorem{lemma}[theorem]{Lemma}
\theoremstyle{definition}
\newtheorem{definition}[theorem]{Definition}
\newtheorem{remark}[theorem]{Remark}
\newtheorem{notation}[theorem]{Notation}
\theoremstyle{remark}
\newcommand{\N}{\mathbb N}
\newcommand{\Z}{\mathbb Z}
\newcommand{\R}{\mathbb R}
\newcommand{\C}{\mathbb C}
\newcommand{\GL}{\operatorname{GL}}
\newcommand{\SL}{\operatorname{SL}}
\newcommand{\SO}{\operatorname{SO}}
\newcommand{\SU}{\operatorname{SU}}
\newcommand{\U}{\operatorname{U}}
\newcommand{\Sp}{\operatorname{Sp}}
\newcommand{\Spin}{\operatorname{Spin}}
\newcommand{\gl}{\mathfrak{gl}}
\newcommand{\sll}{\mathfrak{sl}}
\newcommand{\so}{\mathfrak{so}}
\newcommand{\spp}{\mathfrak{sp}}
\newcommand{\op}{\operatorname}
\newcommand{\diag}{\operatorname{diag}}
\newcommand{\ba}{\backslash}
\newcommand{\mi}{\mathtt{i}}
\newcommand{\norma}[1]{\|{#1}\|_1}
\newcommand{\contador}{Z}
\newcommand{\tipo}{\mathrm}
\title[Multiplicity formulas]{Multiplicity formulas for fundamental strings of representations of classical Lie algebras}
\author{Emilio~A.~Lauret, Fiorela Rossi Bertone}
\address{L.: Institut f\"ur Mathematik, Humboldt Universit\"at zu Berlin, Unter den Linden 6, 10099 Berlin, Germany. 
	Permanent affiliation: CIEM--FaMAF (CONICET), Universidad Nacional de C\'ordoba, Medina Allende s/n, Ciudad Universitaria, 5000 C\'ordoba, Argentina.}
\email{elauret@famaf.unc.edu.ar}
\address{R.B.: CIEM--FaMAF (CONICET), Universidad Nacional de C\'ordoba, Medina Allende s/n, Ciudad Universitaria, 5000 C\'ordoba, Argentina.}
\email{rossib@famaf.unc.edu.ar}
\subjclass[2010]{17B10, 17B22, 22E46}
\thanks{This research was partially supported by grants from CONICET, FONCyT and SeCyT--UNC. The first named author was supported by the Alexander von Humboldt Foundation}
\date{Noviembre 2017}
\begin{document}

\begin{abstract}
We call the \emph{$p$-fundamental string} of a complex simple Lie algebra to the sequence of irreducible representations having highest weights of the form $k\omega_1+\omega_p$ for $k\geq0$, where $\omega_j$ denotes the $j$-th fundamental weight of the associated root system.  
For a classical complex Lie algebra, we establish a closed explicit formula for the weight multiplicities of any representation in any $p$-fundamental string. 
\end{abstract}

\maketitle

\tableofcontents

\section{Introduction}\label{sec:intro}

Let $\mathfrak g$ be a complex semisimple Lie algebra.
We fix a Cartan subalgebra $\mathfrak h$ of $\mathfrak g$. 
Let $(\pi,V_\pi)$ be a finite dimensional representation of $\mathfrak g$, that is, a homomorphism $\pi:\mathfrak g\to \gl(V_\pi)$ with $V_\pi$ a complex vector space.  
An element $\mu\in \mathfrak h^*$ is called a \emph{weight} of $\pi$ if 
\begin{equation*}
V_\pi(\mu):= \{v\in V_\pi: \pi(X)v=\mu(X)v \text{ for all }X\in\mathfrak h\}\neq0. 
\end{equation*}
The \emph{multiplicity} of $\mu$ in the representation $\pi$, denoted by $m_\pi(\mu)$, is defined as $\dim  V_\pi(\mu)$. 

There are many formulas in the literature to compute $m_\pi(\mu)$ for arbitrary $\mathfrak g$, $\pi$ and $\mu$.
The ones by Freudenthal~\cite{Freudenthal54} and Kostant~\cite{Kostant59} are very classical. 
More recent formulas were given by Lusztig~\cite{Lusztig83}, Littelmann~\cite{Littelmann95} and Sahi~\cite{Sahi00}.
Although all of them are very elegant and powerful theoretical results, they may not be considered \emph{closed explicit expressions}.
Moreover, some of them are not adequate for computer implementation (cf.\ \cite{Schutzer04thesis}, \cite{Harris12thesis}).

Actually, it is not expected a closed formula in general. 
There should always be a sum over a symmetric group (whose cardinal grows quickly when the rank of $\mathfrak g$ does) or over partitions, or being recursive, or written in terms of combinatorial objects (e.g.\ Young diagrams like in \cite{Koike87}), among other ways.

However, closed explicit expressions are possible for particular choices of $\mathfrak g$ and $\pi$.  
Obviously, this is the case for $\sll(2,\C)$ and $\pi$ any of its irreducible representations (see \cite[\S I.9]{Knapp-book-beyond}). 
Furthermore, for a classical Lie algebra $\mathfrak g$, it is not difficult to give expressions for the weight multiplicities of the representations $\op{Sym}^k(V_\mathrm{st})$ and $\bigwedge^p (V_\mathrm{st})$ and also for their irreducible components (see for instance Lemmas~\ref{lemCn:extreme}, \ref{lemDn:extremereps} and \ref{lemBn:extremereps} and Theorem~\ref{thmAn:multip(k,p)}; these formulas are probably well known but they are included here for completeness).
Here, $V_{\mathrm{st}}$ denotes the standard representation of $\mathfrak g$. 
A good example of a closed explicit formula in a non-trivial case was given by Cagliero and Tirao~\cite{CaglieroTirao04} for $\spp(2,\C)\simeq\so(5,\C)$ and $\pi$ arbitrary.

In order to end the description of previous results in this large area we name a few recent related results, though the list is far from being complete: \cite{Cochet05}, \cite{BaldoniBeckCochetVergne06}, \cite{Bliem08-thesis}, \cite{Schutzer12}, \cite{Maddox14},  \cite{FernandezGarciaPerelomov2014}, \cite{FernandezGarciaPerelomov2015a}, \cite{FernandezGarciaPerelomov2015b}, \cite{FernandezGarciaPerelomov2017},  \cite{Cavallin17}.

The main goal of this article is to show, for each classical complex Lie algebra $\mathfrak g$ of rank $n$, a closed explicit formula for the weight multiplicities of any irreducible representation of $\mathfrak g$ having highest weight $k\omega_1+\omega_p$, for any integers $k\geq0$ and $1\leq p\leq n$.
Here, $\omega_1,\dots,\omega_n$ denote the fundamental weights associated to the root system $\Sigma(\mathfrak g,\mathfrak h)$.
We call \emph{$p$-fundamental string} to the sequence of irreducible representations of $\mathfrak g$ with highest weights $k\omega_1+\omega_p$ for $k\geq0$. 
We will write $\pi_{\lambda}$ for the irreducible representation of $\mathfrak g$ with highest weight $\lambda$.

For types $\tipo B_n$, $\tipo C_n$ or $\tipo D_n$  (i.e.\ $\so(2n+1,\C)$, $\spp(n,\C)$ or $\so(2n,\C)$ respectively) an accessory representation $\pi_{k,p}$ is introduced to unify the approach (see Definition~\ref{def:pi_kp}). 
We have that $\pi_{k,p}$ and $\pi_{k \omega_1+\omega_p}$ coincide except for $p=n$ in type $\tipo B_n$ and $p=n-1,n$ in type $\tipo D_n$. 
The weight multiplicity formulas for $\pi_{k,p}$ are in Theorems~\ref{thmCn:multip(k,p)}, \ref{thmDn:multip(k,p)} and \ref{thmBn:multip(k,p)} for types $\tipo C_n$, $\tipo D_n$ and $\tipo B_n$ respectively.
Their proofs follow the same strategy (see Section~\ref{sec:strategy}). 
The formulas for the remaining cases, namely the (spin) representations $\pi_{k \omega_1+\omega_n}$ in type $\tipo B_n$ and $\pi_{k \omega_1+\omega_{n-1}}$, $\pi_{k \omega_1+\omega_n}$ in type $\tipo D_n$, can be found in Theorem~\ref{thmDn:multip(spin)} and \ref{thmBn:multip(spin)} respectively.

Given a weight $\mu=\sum_{j=1}^{n} a_j\varepsilon_j$ (see Notation~\ref{notacion}) of a classical Lie algebra $\mathfrak g$ of type $\tipo B_n$, $\tipo C_n$ or $\tipo D_n$, we set
\begin{align}\label{eq:notation-one-norm}
\norma{\mu}=\sum_{j=1}^{n} |a_j| 
\quad\text{and}\quad 
\contador (\mu) = \#\{1\leq j\leq n: a_j=0\}. 
\end{align} 
We call $\norma{\mu}$ the \emph{one-norm} of $\mu$.
The function $\contador(\mu)$ counts the number of zero coordinates of $\mu$.
It is not difficult to check that $m_{\pi_{k\omega_1}}(\mu)$ depends only on $\norma{\mu}$ for a fixed $k\geq0$.
Moreover, it is known that $m_{\pi_{k,p}}(\mu)$ depends only on $\norma{\mu}$ and $\contador (\mu)$ for type $\tipo D_n$ (see \cite[Lem.~3.3]{LMR-onenorm}).
This last property is extended to types $\tipo B_n$ and $\tipo C_n$ as a consequence of their multiplicity formulas.  

\begin{corollary}\label{cor:depending-one-norm-ceros}
For $\mathfrak g$ a classical Lie algebra of type $\tipo B_n$, $\tipo C_n$ or $\tipo D_n$ and a weight $\mu=\sum_{i=1}^{n} a_i\varepsilon_i$, the multiplicity of $\mu$ in $\pi_{k,p}$ depends only on $\norma{\mu}$ and $\contador (\mu)$. 
\end{corollary}

For $\mathfrak g=\sll(n+1,\C)$ (type $\tipo A_n$), the multiplicity formula for a representation in a fundamental string is in Theorem~\ref{thmAn:multip(k,p)}. 
This case is simpler since it follows immediately from basic facts on Young diagrams.
Although this formula should be well known, it is included for completeness.  

Explicit expressions for the weight multiplicities of a representation in a fundamental string are required in several different areas. 
The interest of the authors on them comes from their application to spectral geometry.
Actually, many multiplicity formulas have already been applied to determine the spectrum of Laplace and Dirac operators on certain locally homogeneous spaces. 
See Section~\ref{sec:conclusions} for a detailed account of these applications. 

It is important to note that all the weight multiplicity formulas obtained in this article have been checked with Sage~\cite{Sage} for many cases.
This computer program uses the classical Freudenthal formula. 
Because of the simplicity of the expressions obtained in the main theorems, the computer takes usually a fraction of a second to calculate the result. 

Throughout the article we use the convention $\binom{b}{a}=0$ if $a<0$ or $b<a$.

The article is organized as follows. 
Section~\ref{sec:strategy} explains the method to obtain $m_{\pi_{k,p}}(\mu)$ for types $\tipo B_n$, $\tipo C_n$ and $\tipo D_n$. 
These cases are considered in Sections~\ref{secBn:multip(k,p)}, \ref{secCn:multip(k,p)} and \ref{secDn:multip(k,p)} respectively, and type $\tipo A_n$ is in Section~\ref{secAn:multip(k,p)}. 
In Section~\ref{sec:conclusions} we include some conclusions.

\section{Strategy}\label{sec:strategy}

In this section, we introduce the abstract method used to find the weight multiplicity formulas for the cases $\tipo B_n$, $\tipo C_n$ and $\tipo D_n$. 
Throughout this section, $\mathfrak g$ denotes a classical complex Lie algebra of type $\tipo B_n$, $\tipo C_n$ and $\tipo D_n$, namely $\so(2n+1,\C)$, $\spp(n,\C)$, $\so(2n,\C)$, for some $n\geq2$.
We first introduce some standard notation.

\begin{notation}\label{notacion}
We fix a Cartan subalgebra $\mathfrak h$ of $\mathfrak g$.  
Let $\{\varepsilon_{1}, \dots,\varepsilon_{n}\}$ be the standard basis of $\mathfrak h^*$. Thus, the sets of simple roots $\Pi(\mathfrak g,\mathfrak h)$ are given by $\{\varepsilon_1-\varepsilon_2,\dots, \varepsilon_{n-1}-\varepsilon_n,\varepsilon_n\}$ for type $\tipo B_n$, $\{\varepsilon_1-\varepsilon_2,\dots, \varepsilon_{n-1}-\varepsilon_n,2\varepsilon_n\}$ for type $\tipo C_n$, and $\{\varepsilon_1-\varepsilon_2,\dots, \varepsilon_{n-1}-\varepsilon_n,\varepsilon_{n-1}+\varepsilon_n\}$ for type $\tipo D_n$. 
A precise choice for $\mathfrak h$ and $\varepsilon_j$ will be indicated in each type. 

We denote by $\Sigma(\mathfrak g,\mathfrak h)$ the set of roots, by $\Sigma^+(\mathfrak g,\mathfrak h)$ the set of positive roots, by $\omega_1,\dots,\omega_n$ the fundamental weights, by $P(\mathfrak g)$ the (integral) weight space of $\mathfrak g$ and by $P^{{+}{+}}(\mathfrak g)$ the set of dominant weights.

Let $\mathfrak g_0$ be the compact real form of $\mathfrak g$ associated to $\Sigma(\mathfrak g,\mathfrak h)$, let $G$ be the compact linear group with Lie algebra $\mathfrak g_0$ (e.g.\ $G=\SO(2n)$ for type $\tipo D_n$ in place of $\Spin(2n)$), and let $T$ be the maximal torus in $G$ corresponding to $\mathfrak h$, that is, the Lie algebra $\mathfrak t$ of $T$ is a real subalgebra of $\mathfrak h$.
Write $P(G)$ for the set of $G$-integral weights and $P^{{+}{+}}(G)=P(G)\cap P^{{+}{+}}(\mathfrak g)$.

By the Highest Weight Theorem, the irreducible representations of $\mathfrak g$ and $G$ are in correspondence with elements in $P^{{+}{+}}(\mathfrak g)$ and $P^{{+}{+}}(G)$ respectively.
For $\lambda$ an integral dominant weight, we denote by $\pi_\lambda$ the associated irreducible representation of $\mathfrak g$. 
\end{notation}

We recall that, under Notation~\ref{notacion}, the fundamental weights are: 
\begin{align*}
\text{in type $\tipo B_n$},\qquad 
\omega_p &=
\begin{cases}
	\varepsilon_1+\dots+\varepsilon_p 
	&\text{if $1\leq p\leq n-1$,}\\
	\frac12(\varepsilon_1+\dots+\varepsilon_n) 
	&\text{if $p=n$,}
\end{cases} \\
\text{in type $\tipo C_n$},\qquad
\omega_p &=\varepsilon_1+\dots+\varepsilon_p \quad\text{for every $1\leq p\leq n$},\\
\text{in type $\tipo D_n$},\qquad 
\omega_p &=
\begin{cases}
	\varepsilon_1+\dots+\varepsilon_p 
	&\text{if $1\leq p\leq n-2$,}\\
	\frac12(\varepsilon_1+\dots+\varepsilon_{n-1}-\varepsilon_{n}) 
	&\text{if $p=n-1$,}\\
	\frac12(\varepsilon_1+\dots+\varepsilon_{n-1}+\varepsilon_{n}) 
	&\text{if $p=n$.}
\end{cases}
\end{align*}
We set $\widetilde \omega_p=\varepsilon_1+\dots+\varepsilon_p$ for any $1\leq p\leq n$. 
Thus, $\widetilde \omega_p=\omega_p$ excepts for type $\tipo B_n$ and $p=n$ when $\widetilde \omega_{n}=2\omega_n$, and for type $\tipo D_n$ and $p\in\{n-1,n\}$ when $\widetilde \omega_{n-1}=\omega_{n-1}+\omega_n$ and $\widetilde \omega_{n}=2\omega_n$.

\begin{definition}\label{def:pi_kp}
Let $\mathfrak g$ be a classical Lie algebra of type $\tipo B_n$, $\tipo C_n$ or $\tipo D_n$. 
For $k\geq0$ and $1\leq p\leq n$ integers, let us denote by $\pi_{k,p}$ the irreducible representation of $\mathfrak g$ with highest weight $k\omega_1+\widetilde \omega_p$, except for $p=n$ and type $\tipo D_n$ when we set $\pi_{k,n}=\pi_{k\omega_1+2\omega_{n-1}}\oplus \pi_{k\omega_1+2\omega_{n}}$.
By convention, we set $\pi_{k,0}=0$ for $k\geq0$. 
\end{definition}

We next explain the procedure to determine the multiplicity formula for $\pi_{k,p}$.

\begin{description}
	\item[Step 1]
	Obtain the decomposition in irreducible representations of 
	\begin{equation}\label{eq:sigma_kp}
	\sigma_{k,p}:=\pi_{k\omega_1}\otimes \pi_{\widetilde \omega_p},
	\end{equation}
	and consequently, write $\pi_{k,p}$ in terms of representations of the form \eqref{eq:sigma_kp} in the virtual representation ring. 
	
	Fortunately, this decomposition is already known and coincides for the  types $\tipo B_n$, $\tipo C_n$ and $\tipo D_n$, thus the second requirement has also a uniform statement (see Lemma~\ref{lem:step1}).

	\item[Step 2]
	Obtain a formula for the weight multiplicities of the extreme cases $\pi_{k\omega_1}$ and $\pi_{\widetilde \omega_p}$.

	It will be useful to realize these representations inside $\op{Sym}^k(V_{\pi_{\omega_1}})$ and $\bigwedge^p(V_{\pi_{\omega_1}})$ respectively. 
	Note that $\pi_{\omega_1}$ is the standard representation.

	\item[Step 3]
	Obtain a closed expression for the weight multiplicities on $\sigma_{k,p}$.
	 
	This is the hardest step. 
	One has that (see for instance \cite[Exercise~V.14]{Knapp-book-beyond})
	\begin{equation}\label{eq:multiptensor}
	m_{\sigma_{k,p}}(\mu) =
	\sum_{\eta} m_{\pi_{k\omega_1}}(\mu-\eta) \, m_{\pi_{\widetilde \omega_p}}(\eta),
	\end{equation}
	where the sum is over the weights of $\pi_{\widetilde \omega_p}$.
	Then, the multiplicity formulas obtained in Step~2 can be applied. 
	
	\item[Step 4]
	Obtain the weight multiplicity formula for $\pi_{k,p}$.
	
	We will replace the formula obtained in Step~3 into the formula obtained in Step~1.
\end{description}

\smallskip

The following result works out Step~1.

\begin{lemma}\label{lem:step1}
	Let $\mathfrak g$ be a classical Lie algebra of type $\tipo B_n$, $\tipo C_n$ or $\tipo D_n$ and let $k\geq0$, $1\leq p\leq n$ integers.
	Then
	\begin{equation}\label{eq:funsionrule(sigma)}
	\sigma_{k,p} = \pi_{k\omega_1}\otimes \pi_{\widetilde \omega_p}  = 
	\pi_{k-1,1}\otimes \pi_{0,p} \simeq \pi_{k,p}\oplus \pi_{k-1,p+1} \oplus \pi_{k-2,p}\oplus \pi_{k-1,p-1}.
	\end{equation} 
	Furthermore, in the virtual ring of representations, we have that 
	\begin{equation}\label{eq:virtualring(sigma)}
	\pi_{k,p} = \sum_{j=1}^p (-1)^{j-1} \sum_{i=0}^{j-1}  \sigma_{k+j-2i,p-j}.
	\end{equation}
\end{lemma}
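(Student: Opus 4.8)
The statement splits into two parts: the tensor product decomposition \eqref{eq:funsionrule(sigma)} and its inversion \eqref{eq:virtualring(sigma)} in the virtual representation ring. The two equalities preceding the symbol $\simeq$ are immediate from Definition~\ref{def:pi_kp}: since $\widetilde\omega_1=\omega_1=\varepsilon_1$ we have $\pi_{k-1,1}=\pi_{k\omega_1}$, and $\pi_{0,p}=\pi_{\widetilde\omega_p}$ because $\widetilde\omega_p$ is by definition the highest weight of $\pi_{0,p}$. Hence the real content of the first part is the isomorphism $\simeq$, and that of the second part is to solve the resulting linear relations for $\pi_{k,p}$.

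\emph{The decomposition.} To prove $\pi_{k\omega_1}\otimes\pi_{\widetilde\omega_p}\simeq\pi_{k,p}\oplus\pi_{k-1,p+1}\oplus\pi_{k-2,p}\oplus\pi_{k-1,p-1}$, I would first record that the sum of the highest weights equals $(k+1)\varepsilon_1+\varepsilon_2+\dots+\varepsilon_p=k\omega_1+\widetilde\omega_p$, the highest weight of $\pi_{k,p}$, occurring with multiplicity one. Realizing the factors inside $\op{Sym}^k(V_{\pi_{\omega_1}})$ and $\bigwedge^p(V_{\pi_{\omega_1}})$ as in Step~2, the type~$\tipo A$ Pieri rule produces exactly the two constituents of maximal one-norm $k+p$, namely $\pi_{k,p}$ (highest weight $(k+1)\varepsilon_1+\varepsilon_2+\dots+\varepsilon_p$) and $\pi_{k-1,p+1}$ (highest weight $k\varepsilon_1+\varepsilon_2+\dots+\varepsilon_{p+1}$), while the two remaining constituents $\pi_{k-2,p}$ and $\pi_{k-1,p-1}$, of one-norm $k+p-2$, come from the contraction maps supplied by the invariant bilinear form on $V_{\pi_{\omega_1}}$. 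A comparison of highest weights, or equivalently an appeal to the classical Littlewood--King fusion rules for the orthogonal and symplectic groups, confirms that these four irreducibles each occur once and exhaust the product, uniformly in the three types. The boundary case $p=n$ (and the reducible $\pi_{k,n}$ of type~$\tipo D_n$) is handled by the conventions of Definition~\ref{def:pi_kp} together with the rule that any symbol with second index exceeding $n$ is zero.

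\emph{The inversion.} Solving \eqref{eq:funsionrule(sigma)} for its top term $\pi_{k-1,p+1}$ and then substituting $(k,p)\mapsto(k+1,p-1)$ gives the recursion
\[
\pi_{k,p}=\sigma_{k+1,p-1}-\pi_{k+1,p-1}-\pi_{k-1,p-1}-\pi_{k,p-2},
\]
in which every representation on the right has second index strictly less than $p$. I would then induct on $p$. The base cases $p=0$, where $\pi_{k,0}=0$, and $p=1$, where $\pi_{k,1}=\pi_{(k+1)\omega_1}=\sigma_{k+1,0}$, both agree with \eqref{eq:virtualring(sigma)}. For the inductive step I substitute the formula \eqref{eq:virtualring(sigma)}, already known for the three lower terms $\pi_{k+1,p-1}$, $\pi_{k-1,p-1}$ and $\pi_{k,p-2}$, into the recursion, and verify by a direct reindexing of the nested sums $\sum_{j}\sum_{i}$ that the right-hand side collapses to $\sum_{j=1}^{p}(-1)^{j-1}\sum_{i=0}^{j-1}\sigma_{k+j-2i,p-j}$. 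It is worth noting that every $\sigma$ occurring in \eqref{eq:virtualring(sigma)} has second index $p-j\le p-1\le n-1$, so the inversion never invokes the delicate $p=n$ case of the decomposition.

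\emph{Main obstacle.} The genuine difficulty lies in the first part: pinning down the two trace constituents $\pi_{k-2,p}$ and $\pi_{k-1,p-1}$, confirming that all four multiplicities equal one, and doing so uniformly for $\tipo B_n$, $\tipo C_n$ and $\tipo D_n$ including the boundary $p=n$. Once \eqref{eq:funsionrule(sigma)} is in hand, the inversion \eqref{eq:virtualring(sigma)} is routine: a finite induction on $p$ whose only work is the combinatorial cancellation in the double sum.
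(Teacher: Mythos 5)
Your proposal is correct and follows essentially the same route as the paper: the decomposition \eqref{eq:funsionrule(sigma)} is deferred to the classical stable fusion rules (the paper simply cites Koike--Terada; your Pieri-plus-contraction sketch ultimately rests on the same classical Littlewood--King results), and the inversion \eqref{eq:virtualring(sigma)} is proved by the identical induction on $p$, via the same recursion $\pi_{k,p}=\sigma_{k+1,p-1}-\pi_{k+1,p-1}-\pi_{k-1,p-1}-\pi_{k,p-2}$ and the same reindexing of the double sum. The only cosmetic difference is that you solve the fusion rule for $\pi_{k-1,p+1}$ and shift indices, whereas the paper solves for $\pi_{k,p+1}$ directly; these are the same identity.
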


\begin{proof}
	The decomposition \eqref{eq:funsionrule(sigma)} is proved in \cite[page 510, example (3)]{KoikeTerada87} by Koike and Terada, though their results are much more general and this particular case was probably already known.  
	
We now show \eqref{eq:virtualring(sigma)}.
The case $p=1$ is trivial.
Indeed, the right hand side equals $\sigma_{k+1,0}=\pi_{k,1}$ by definition.
We assume that the formula is valid for values lower than or equal to $p$.
By this assumption and \eqref{eq:funsionrule(sigma)} we have that
\begin{align*}
	\pi_{k,p+1}
	&= \sigma_{k+1,p} - \pi_{k+1,p} -\pi_{k-1,p} -\pi_{k,p-1} 
	= \sigma_{k+1,p} - \sum_{j=1}^p (-1)^{j-1}\sum_{i=0}^{j-1} \sigma_{k+1+j-2i,p-j}\\
	&\qquad      - \sum_{j=1}^p (-1)^{j-1}\sum_{i=0}^{j-1} \sigma_{k-1+j-2i,p-j}
	- \sum_{j=1}^{p-1} (-1)^{j-1}\sum_{i=0}^{j-1} \sigma_{k+j-2i,p-1-j}.
\end{align*}
By making the change of variables $h=j+1$ in the last term, one gets 
\begin{align*}
\pi_{k,p+1}
	&= \sigma_{k+1,p}- \sum_{j=1}^p (-1)^{j-1}\sum_{i=0}^{j-1} \sigma_{k+1+j-2i,p-j}- \sigma_{k,p-1}  - \sum_{j=2}^p (-1)^{j-1} \sigma_{k+1-j,p-j}. 
\end{align*}
The rest of the proof is straightforward. 
\end{proof}

\section{Type C} \label{secCn:multip(k,p)}

In this section we consider the classical Lie algebra $\mathfrak g$ of type $\tipo C_n$, that is, $\mathfrak g=\spp(n,\C)$.
In this case, according to Notation~\ref{notacion}, $\widetilde \omega_p=\omega_p$ for every $p$, thus $\pi_{k\omega_1+\omega_p} = \pi_{k,p}$. 
The next theorem gives the explicit expression of $m_{\pi_{k,p}}(\mu)$ for any weight $\mu$. 
This expression depends on the terms $\norma{\mu}$ and $\contador (\mu)$, introduced in \eqref{eq:notation-one-norm}.

\begin{theorem}\label{thmCn:multip(k,p)}
	Let $\mathfrak g=\spp(n,\C)$ for some $n\geq2$ and let $k\geq0$, $1\leq p\leq n$ integers.
	For $\mu\in P(\mathfrak g)$, if $r(\mu):=(k+p-\norma{\mu})/2$ is a non-negative integer, then
	\begin{align*}
		m_{\pi_{k,p}}(\mu)
		&= \sum_{j=1}^{p} (-1)^{j-1}  \sum_{t=0}^{\lfloor\frac{p-j}{2}\rfloor} \frac{n-p+j+1}{n-p+j+t+1}\binom{n-p+j+2t}{t} \\
		&\quad \sum_{\beta=0}^{p-j-2t} 2^{p-j-2t-\beta} \binom{n-\contador (\mu)}{\beta} \binom{\contador (\mu)}{p-j-2t-\beta}  \\
		&\quad \sum_{\alpha=0}^\beta \binom{\beta}{\alpha} \sum_{i=0}^{j-1} \binom{r(\mu)-i-p+\alpha+t+j+n-1}{n-1},		
	\end{align*}
	and $m_{\pi_{k,p}}(\mu)=0$ otherwise.
\end{theorem}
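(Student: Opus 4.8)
The plan is to execute the four-step strategy laid out in Section~\ref{sec:strategy}, combining the virtual-ring expansion \eqref{eq:virtualring(sigma)} from Lemma~\ref{lem:step1} (Step~1) with explicit multiplicity formulas for the extreme representations (Step~2), assembling the tensor-product multiplicity via \eqref{eq:multiptensor} (Step~3), and then substituting (Step~4). First I would write $m_{\pi_{k,p}}(\mu)$ via \eqref{eq:virtualring(sigma)} as the alternating double sum over $\sigma_{k+j-2i,p-j}$, reducing everything to computing $m_{\sigma_{k',p'}}(\mu)$. For this I need the two ingredients of Step~2 in type $\tipo C_n$: a formula for $m_{\pi_{k\omega_1}}(\mu)$, which (since $\pi_{k\omega_1}=\op{Sym}^k(V_{\mathrm{st}})$ in type $\tipo C_n$) depends only on $\norma{\mu}$ and should take the form of a binomial $\binom{r+n-1}{n-1}$ counting nonnegative integer solutions with the correct parity, and a formula for $m_{\pi_{\widetilde\omega_p}}(\eta)=m_{\pi_{\omega_p}}(\eta)$, the weights of the fundamental representation $\bigwedge^p(V_{\mathrm{st}})$ minus its lower pieces. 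I expect the latter to be supplied by the lemma referenced as Lemma~\ref{lemCn:extreme}, giving the multiplicity of a weight $\eta$ in $\pi_{\omega_p}$ in terms of $\norma{\eta}$ and $\contador(\eta)$, with the factor $\frac{n-p+j+1}{n-p+j+t+1}\binom{n-p+j+2t}{t}$ encoding how the irreducible fundamental representation sits inside $\bigwedge^p(V_{\mathrm{st}})$ after removing trace contractions.

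With these in hand, the heart of the argument is Step~3: computing $m_{\sigma_{k',p'}}(\mu)=\sum_\eta m_{\pi_{k'\omega_1}}(\mu-\eta)\,m_{\pi_{\widetilde\omega_{p'}}}(\eta)$. The weights $\eta$ of the fundamental representation are sums of $p'$ distinct signed coordinates $\pm\varepsilon_i$, so I would parametrize them by choosing which coordinates are nonzero and which signs they carry. The key combinatorial bookkeeping is to split the chosen nonzero coordinates of $\eta$ according to whether they lie among the $n-\contador(\mu)$ nonzero coordinates of $\mu$ or among the $\contador(\mu)$ zero coordinates, which produces the inner factor $2^{p-j-2t-\beta}\binom{n-\contador(\mu)}{\beta}\binom{\contador(\mu)}{p-j-2t-\beta}$, with $\beta$ counting the overlap with the support of $\mu$. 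The variable $\alpha$ (with the binomial $\binom{\beta}{\alpha}$) then records, among those overlapping coordinates, how many of the signs of $\eta$ agree versus disagree with the signs of $\mu$, since only the sign pattern affects $\norma{\mu-\eta}$ and hence the $\op{Sym}$-factor. Summing the symmetric-power multiplicity $\binom{r(\mu)-\dots+n-1}{n-1}$ over these sign choices yields the innermost binomial with shifted argument.

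The main obstacle I anticipate is precisely this Step~3 combinatorial reduction: correctly tracking how $\norma{\mu-\eta}$ varies over all admissible $\eta$, so that the parity/nonnegativity constraint $r(\mu)=(k+p-\norma{\mu})/2\in\Z_{\geq0}$ emerges cleanly and the number of free sign choices collapses into the stated binomial coefficients. One must verify that contributions with $\norma{\mu-\eta}$ of the wrong parity, or exceeding $k'$, vanish, and that the index ranges ($t$ up to $\lfloor\frac{p-j}{2}\rfloor$, $\beta$ up to $p-j-2t$, $\alpha$ up to $\beta$, $i$ from $0$ to $j-1$) exactly match the geometry of the weights. Finally, in Step~4 I would substitute the Step~3 expression for $m_{\sigma_{k+j-2i,p-j}}(\mu)$ into \eqref{eq:virtualring(sigma)}; after relabeling the summation indices so that the $\sum_{i=0}^{j-1}$ from the virtual-ring formula merges with the symmetric-power sum, the shifts $k+j-2i$ feed directly into $r(\mu)$, producing the argument $r(\mu)-i-p+\alpha+t+j+n-1$ and completing the formula. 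A concluding sanity check against Sage, as the authors note, would confirm the index alignment that is easy to get wrong by an off-by-one.
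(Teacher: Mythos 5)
Your proposal is correct and follows essentially the same route as the paper: its proof likewise combines the virtual-ring expansion \eqref{eq:virtualring(sigma)} of Lemma~\ref{lem:step1} with the extreme-case formulas of Lemma~\ref{lemCn:extreme}, and its Lemma~\ref{lemCn:multip(sigma_kp)} carries out exactly your Step~3 partition of the weights of $\pi_{\omega_p}$ into sets indexed by $(t,\beta,\alpha)$ (one-norm drop, overlap with the support of $\mu$, and sign agreements) before substituting into the alternating sum. The index bookkeeping you flagged as the main risk works out precisely as you describe, with $r$ for $\sigma_{k+j-2i,p-j}$ equal to $r(\mu)-i$.
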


The rest of this section is devoted to prove this formula following the procedure described in Section~\ref{sec:strategy}. 
We first set the notation for this case. 
Here $G=\Sp(n,\C)\cap \U(2n)$ where $\Sp(n,\C) = \{g\in\SL(2n,\C): g^t J_ng=J_n:=\left(\begin{smallmatrix}0&\op{Id}_n\\ -\op{Id}_n&0\end{smallmatrix}\right)\}$,
$\mathfrak g_0=\mathfrak{sp}(n,\C)\cap \mathfrak{u}(2n)$, 
\begin{align}\label{eqCn:maximaltorus}
	T&=
	\left\{
	\diag\left(e^{\mi \theta_1},\dots, e^{\mi \theta_n},e^{-\mi \theta_1},\dots, e^{-\mi \theta_n}
	\right)
	:\theta_i\in\R\;\forall\,i
	\right\},\\
	\label{eqCn:subalgCartan}
	\mathfrak h &=
	\left\{
	\diag(\theta_1,\dots,\theta_n,-\theta_1,\dots,-\theta_n):
	\theta_i\in\C \;\forall\,i
	\right\},
\end{align}
$\varepsilon_i\big(\diag\left(\theta_1,\dots,\theta_n,-\theta_1,\dots,-\theta_n \right)\big) =\theta_i$ for each $1\leq i\leq n$, $\Sigma^+(\mathfrak g,\mathfrak h)= \{\varepsilon_i\pm \varepsilon_j: 1\leq i<j\leq n\}\cup\{2\varepsilon_i:1\leq i\leq n\}$, and
\begin{align*}
	P(\mathfrak g) &= P(G)= \Z\varepsilon_1\oplus\dots\oplus\Z\varepsilon_{n},\\
	P^{{+}{+}}(\mathfrak g) &= P^{{+}{+}}(G)= \left\{\textstyle\sum_{i}a_i\varepsilon_i \in P(\mathfrak g) :a_1\geq a_2\geq  \dots \geq a_{n}\geq0\right\}.
\end{align*}
The following well known identities (see for instance \cite[\S17.2]{FultonHarris-book}) will be useful to show Step~2,
\begin{align}\label{eqCn:extremereps}
\pi_{k\omega_1}=\pi_{k\varepsilon_1} &\simeq  \op{Sym}^k(\C^{2n}),&
\textstyle\bigwedge^p(\C^{2n}) &\simeq \pi_{\omega_p}\oplus \textstyle\bigwedge^{p-2}(\C^{2n}),
\end{align} 
for any integers $k\geq0$ and $1\leq p\leq n$.
Here, $\C^{2n}$ denotes the standard representation of $\mathfrak g=\spp(2n,\C)$.
Since $G=\Sp(n)$ is simply connected, $\pi_{\lambda}$ descends to a representation of $G$ for any $\lambda\in P^{{+}{+}}(\mathfrak g)$. 
In what follows we will work with representations of $G$ for simplicity. 
Thus,
$
m_{\pi}(\mu) = \dim \{v\in V_\pi : \pi (\exp X) v = e^{\mu(X)}v\quad \forall\, X\in\mathfrak t\}.
$

\begin{lemma}\label{lemCn:extreme}
	Let $n\geq2$, $\mathfrak g=\spp(n,\C)$, $k\geq0$, $1\leq p\leq n$ and $\mu=\sum_{j=1}^n a_j\varepsilon_j\in P(\mathfrak g)$. Then
	\begin{align}\label{eqCn:multip(k)}
		m_{\pi_{k\omega_1}}(\mu) &=m_{\pi_{k\varepsilon_1}}(\mu)=
		\begin{cases}
			\binom{r(\mu)+n-1}{n-1} & \text{ if }\, r(\mu):=\frac{k-\norma{\mu}}{2}\in \N_0,\\
			0 & \text{ otherwise,}
		\end{cases}
		 \\
		m_{\pi_{\omega_p}}(\mu) &=
		\begin{cases}
			\frac{n-p+1}{n-p+r(\mu)+1}\binom{n-p+2r(\mu)}{r(\mu)} & \text{if }\,r(\mu):=\frac{p-\norma{\mu}}{2}\in \N_0  \text{ and }  |a_j|\leq1\;\forall\,j,\\
			0&\text{otherwise.}
		\end{cases}
		\label{eqCn:multip(p)}
	\end{align}
\end{lemma}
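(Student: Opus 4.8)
The plan is to follow Step~2 of Section~\ref{sec:strategy}, realizing both representations inside tensor constructions on the standard representation $V_{\pi_{\omega_1}}=\C^{2n}$ and counting weights combinatorially. The weights of $\C^{2n}$ are exactly $\pm\varepsilon_1,\dots,\pm\varepsilon_n$, each with multiplicity one. For the first identity I would use $\pi_{k\omega_1}\simeq\op{Sym}^k(\C^{2n})$ directly, while for the second I would first compute the multiplicities of the reducible module $\bigwedge^p(\C^{2n})$ and then extract the irreducible summand $\pi_{\omega_p}$ by means of the recursion in \eqref{eqCn:extremereps}.

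\emph{First formula.} A weight vector of $\op{Sym}^k(\C^{2n})$ is a degree-$k$ monomial in the weight basis, so its weight is obtained by choosing, for each $j$, nonnegative integers $p_j,q_j$ counting the factors of weight $+\varepsilon_j$ and $-\varepsilon_j$. To realize $\mu=\sum_j a_j\varepsilon_j$ one needs $p_j-q_j=a_j$, and writing $s_j=\min(p_j,q_j)$ the total degree splits as $\norma{\mu}+2\sum_j s_j=k$. This forces $r(\mu)=(k-\norma{\mu})/2\in\N_0$, and the surviving freedom is precisely the choice of $(s_1,\dots,s_n)\in\N_0^{\,n}$ with $\sum_j s_j=r(\mu)$. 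A stars-and-bars count then yields $\binom{r(\mu)+n-1}{n-1}$, which is \eqref{eqCn:multip(k)}.

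\emph{Second formula.} A weight vector of $\bigwedge^p(\C^{2n})$ corresponds to a $p$-element subset of the weight basis $\{e_{\pm1},\dots,e_{\pm n}\}$, where $e_{\pm j}$ has weight $\pm\varepsilon_j$. Each coordinate $j$ contributes $a_j\in\{-1,0,1\}$ according to whether the subset contains only $e_{-j}$, both or neither of $e_{\pm j}$, or only $e_{+j}$; hence $|a_j|\leq1$ is forced. The nonzero coordinates are then determined and occupy $\norma{\mu}$ slots, while among the $\contador(\mu)$ zero coordinates one must select which ones contribute both basis vectors; this uses two slots each, so exactly $r(\mu)=(p-\norma{\mu})/2\in\N_0$ of them are chosen. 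Therefore $m_{\bigwedge^p(\C^{2n})}(\mu)=\binom{\contador(\mu)}{r(\mu)}$, and since the constraints give $\contador(\mu)=n-\norma{\mu}=n-p+2r(\mu)$, this equals $\binom{n-p+2r(\mu)}{r(\mu)}$.

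Finally, from $\bigwedge^p(\C^{2n})\simeq\pi_{\omega_p}\oplus\bigwedge^{p-2}(\C^{2n})$ I would subtract the two multiplicities. The module $\bigwedge^{p-2}(\C^{2n})$ has the analogous count with $r$ replaced by $r(\mu)-1$, namely $\binom{n-p+2r(\mu)}{r(\mu)-1}$ (the convention $\binom{b}{a}=0$ for $a<0$ handling $r(\mu)=0$ and the base case $p=1$), so
\[
m_{\pi_{\omega_p}}(\mu)=\binom{n-p+2r(\mu)}{r(\mu)}-\binom{n-p+2r(\mu)}{r(\mu)-1}.
\]
Writing $N=n-p+2r(\mu)$ and using $\binom{N}{r(\mu)-1}=\tfrac{r(\mu)}{N-r(\mu)+1}\binom{N}{r(\mu)}$ collapses this difference to $\tfrac{n-p+1}{n-p+r(\mu)+1}\binom{n-p+2r(\mu)}{r(\mu)}$, which is \eqref{eqCn:multip(p)}. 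I expect the main obstacle to be the bookkeeping in the subset count for $\bigwedge^p(\C^{2n})$ — in particular keeping the zero and nonzero coordinates separate so that $\contador(\mu)$ appears cleanly — together with verifying the binomial telescoping; both are routine but must be handled with care around the boundary values of $r(\mu)$.
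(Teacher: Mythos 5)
Your proof is correct and follows essentially the same route as the paper: it realizes $\pi_{k\omega_1}$ in $\op{Sym}^k(\C^{2n})$ and counts monomials by stars-and-bars, then computes $m_{\bigwedge^p(\C^{2n})}(\mu)=\binom{n-p+2r(\mu)}{r(\mu)}$ by choosing which zero coordinates contribute a pair $e_j\wedge e_{j+n}$, and finally extracts $\pi_{\omega_p}$ via the recursion \eqref{eqCn:extremereps}. The only difference is cosmetic: your subtracted term $\binom{n-p+2r(\mu)}{r(\mu)-1}$ is the correctly written form of the paper's intermediate expression (which contains a typo there), and your binomial telescoping reproduces the paper's final coefficient $\tfrac{n-p+1}{n-p+r(\mu)+1}$ exactly.
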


\begin{proof}
By \eqref{eqCn:extremereps}, $\pi_{k\varepsilon_1}$ is realized in the space of homogeneous polynomials $\mathcal P_k\simeq \op{Sym}^k(\C^{2n})$ of degree $k$ in the variables $x_1,\dots,x_{2n}$. 
The action of $g\in G$ on $f(x)\in \mathcal P_k$ is given by $(\pi_{k\varepsilon_1}(g)\cdot f)(x) = f(g^{-1}x)$, where $x$ denotes the column vector $(x_1,\dots,x_{2n})^t$.

The monomials $x_1^{k_1}\dots x_n^{k_n}x_{n+1}^{l_1}\dots x_{2n}^{l_n}$ with $k_1,\dots,k_n,l_1,\dots,l_n$ non-negative integers satisfying that $\sum_{j=1}^{n} k_j+l_j=k$ form a basis of $\mathcal P_k$ given by weight vectors.
Indeed, one can check that the action of $h=\diag\left(e^{\mi \theta_1},\dots, e^{\mi \theta_n},e^{-\mi \theta_1},\dots, e^{-\mi \theta_n} \right) \in T$ on the monomial $x_1^{k_1}\dots x_n^{k_n}x_{n+1}^{l_1}\dots x_{2n}^{l_n}$ is given by multiplication by
$
e^{\mi\sum_{j=1}^n\theta_j(k_j-l_j)}.
$ 
Hence, $x_1^{k_1}\dots x_n^{k_n}x_{n+1}^{l_1}\dots x_{2n}^{l_n}$ is a weight vector of weight  $\mu=\sum_{j=1}^n (k_j-l_{j}) \varepsilon_j$.

Consequently, the multiplicity of a weight $\mu=\sum_{j=1}^n a_j\varepsilon_j\in\mathcal P(\mathfrak g)$ in $\mathcal P_k$ is the number of different tuples $(k_1,\dots,k_{n},l_1,\dots,l_{n})\in\N_0^{2n}$ satisfying that $\sum_{j=1}^{n} (k_j+ l_j)=k$ and $a_j=k_j-l_{j}$ for all $j$. 
For such a tuple, we note that $k-\norma{\mu}= k-\sum_{i=1}^n |a_i|=2\sum_{i=1}^n \operatorname{min}(k_i,l_i)$. 
It follows that $\mu$ is a weight of $\mathcal{P}_k$ if and only if $k-\norma{\mu}=2r$ with $r$ a non-negative integer.
Moreover, its multiplicity is the number of different ways one can write $r$ as an ordered sum of $n$ non-negative integers, which equals $\binom{r+n-1}{n-1}$.
This implies \eqref{eqCn:multip(k)}.

For \eqref{eqCn:multip(p)}, we consider the representation $\bigwedge^p(\C^{2n})$.
The action of $G$ on $\bigwedge^p(\C^{2n})$ is given by 
$
g\cdot v_1\wedge\dots\wedge v_p = (g v_1)\wedge\dots\wedge (g v_p),
$
where $gv$ stands for the matrix multiplication between $g\in G\subset \GL(2n,\C)$ and the column vector $v\in \C^{2n}$.

Let $\{e_1,\dots,e_{2n}\}$ denote the canonical basis of $\C^{2n}$. 
For $I=\{i_1,\dots,i_p\}$ with $1\leq i_1<\dots<i_p\leq 2n$, we write $w_I=e_{i_1}\wedge\dots\wedge e_{i_p}$.
Clearly, the set of $w_I$ for all choices of $I$ is a basis of $\bigwedge^p(\C^{2n})$.
Since $h=\diag\left(e^{\mi \theta_1},\dots, e^{\mi \theta_n} ,e^{-\mi \theta_1},\dots, e^{-\mi \theta_n} \right) \in T$ satisfies $h e_j = e^{\mi \theta_j} e_j$ and $h e_{j+n} = e^{-\mi \theta_j}e_{j+n}$ for all $1\leq j\leq n$, we see that $w_I$ is a weight vector of weight $\mu=\sum_{j=1}^n a_j\varepsilon_j$ where
	\begin{align}\label{eq:weight_exteriorCn}
	a_j=\begin{cases}
	1&\quad\text{if $j\in I$ and $j+n\notin I$,}\\
	-1&\quad\text{if $j\notin I$ and $j+n\in I$,}\\
	0&\quad\text{if $j,j+n\in I$ or $j,j+n\notin I$.}
	\end{cases}
	\end{align}
Thus, an arbitrary element $\mu=\sum_j a_j\varepsilon_j\in P(\mathfrak g)$ is a weight of $\bigwedge^p(\C^{2n})$ if and only if $|a_j|\leq 1$ for all $j$ and $p-\norma{\mu}= 2r$ for some non-negative integer $r$.

It remains to determine the multiplicity in $\bigwedge^p(\C^{2n})$ of a weight $\mu=\sum_{j=1}^n a_j\varepsilon_j\in P(\mathfrak{g})$ satisfying $|a_j|\leq 1$ for all $j$ and $r:=\frac{p-\norma{\mu}}{2}\in\N_0$.
Let $I_\mu=\{i:1\leq i\leq n, \,  a_i=1\}\cup\{i:n+1\leq i\leq 2n,\, a_{i-n}=-1\}$.
The set $I_\mu$ has $p-2r$ elements.
For $I=\{i_1,\dots,i_p\}$ with $1\leq i_1<\dots<i_p\leq 2n$, it is a simple matter to check that $w_I$ is a weight vector with weight $\mu$ if and only if $I$ has $p$ elements, $I_\mu\subset I$ and $I$ has the property  that $j\in I\smallsetminus I_\mu \iff j+n\in I\smallsetminus I_\mu$ for $1\leq j\leq n$.
One can see that there are $\binom{n-p+2r}{r}$ choices for $I$.
Hence $ m_{\bigwedge^p(\C^{2n})}(\mu) = \binom{n-p+2r}{r}$.
From \eqref{eqCn:extremereps}, we conclude that $m_{\pi_{\omega_p}}(\mu) = m_{\bigwedge^p(\C^{2n})}(\mu) - m_{\bigwedge^{p-2}(\C^{2n})}(\mu) = \binom{n-p+2r}{r} - \binom{n-p+2+2r}{r}$ and \eqref{eqCn:multip(p)} is proved. 
\end{proof}

We next consider Step~3, namely, a multiplicity formula for $\sigma_{k,p}$.

\begin{lemma}\label{lemCn:multip(sigma_kp)}
Let $n\geq2$, $\mathfrak g=\spp(n,\C)$, $k\geq0$, $1\leq p<n$, and $\mu\in P(\mathfrak g)$.
If $r(\mu):=(k+p-\norma{\mu})/2$ is a non-negative integer, then 
	\begin{align*}
		m_{\sigma_{k,p}}(\mu)
		&= \sum_{t=0}^{\lfloor{p}/{2}\rfloor} \frac{n-p+1}{n-p+t+1}\binom{n-p+2t}{t}\sum_{\beta=0}^{p-2t} 2^{p-2t-\beta} \binom{n-\contador (\mu)}{\beta} \binom{\contador (\mu)}{p-2t-\beta}  \\
		&\qquad \sum_{\alpha=0}^\beta \binom{\beta}{\alpha} \binom{r(\mu)-p+\alpha+t+n-1}{n-1},
	\end{align*}
and $m_{\sigma_{k,p}}(\mu)=0$ otherwise. 
\end{lemma}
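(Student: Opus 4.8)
The plan is to compute $m_{\sigma_{k,p}}(\mu)$ directly from the tensor-product convolution \eqref{eq:multiptensor}, using the two extreme multiplicity formulas just established in Lemma~\ref{lemCn:extreme}. Writing $\sigma_{k,p}=\pi_{k\omega_1}\otimes\pi_{\omega_p}$, we have
\begin{equation*}
m_{\sigma_{k,p}}(\mu)=\sum_{\eta} m_{\pi_{k\omega_1}}(\mu-\eta)\,m_{\pi_{\omega_p}}(\eta),
\end{equation*}
where $\eta$ runs over the weights of $\pi_{\omega_p}$. By \eqref{eqCn:multip(p)}, only those $\eta=\sum_j b_j\varepsilon_j$ with $|b_j|\leq1$ for all $j$ and $\norma{\eta}=p-2t$ (for some integer $t\geq0$) contribute, each carrying the factor $\frac{n-p+1}{n-p+t+1}\binom{n-p+2t}{t}$, which depends only on $t=\contador(\eta)$-free data, i.e.\ on $\norma{\eta}$. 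So first I would organize the sum over $\eta$ according to the value of $t$, pulling that weight multiplicity out as a common factor for each fixed $t$.

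Next I would fix $\eta$ with $|b_j|\le 1$ and $\norma{\eta}=p-2t$, and analyse the factor $m_{\pi_{k\omega_1}}(\mu-\eta)=\binom{r'+n-1}{n-1}$ from \eqref{eqCn:multip(k)}, where $2r'=k-\norma{\mu-\eta}$ when this is a non-negative integer. The key computation is to express $\norma{\mu-\eta}$ in terms of $\norma{\mu}$ and the combinatorial position of the support of $\eta$ relative to the zero/nonzero coordinates of $\mu$. For each coordinate $j$: if $a_j=0$ then $|a_j-b_j|=|b_j|$; if $a_j\neq0$ then adding $b_j\in\{-1,0,1\}$ shifts $|a_j|$ by $\pm1$ or $0$ depending on the sign alignment. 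This is exactly where the invariants $\contador(\mu)$ and $n-\contador(\mu)$ enter. I would introduce $\beta$ to count how many of the $p-2t$ nonzero entries of $\eta$ sit over nonzero coordinates of $\mu$, and $p-2t-\beta$ for those sitting over zero coordinates of $\mu$; the binomials $\binom{n-\contador(\mu)}{\beta}\binom{\contador(\mu)}{p-2t-\beta}$ count the placements, and the factor $2^{p-2t-\beta}$ accounts for the two sign choices of each $b_j$ placed over a zero coordinate of $\mu$ (both of which change $\norma{\cdot}$ the same way). Over a nonzero coordinate, the sign of $b_j$ matters: I would let $\alpha$ of these $\beta$ entries be "aligned" (increasing $|a_j|$) and $\beta-\alpha$ be "opposed" (decreasing it), giving the inner factor $\binom{\beta}{\alpha}$ and shifting $r'$ accordingly.

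The bookkeeping then shows $\norma{\mu-\eta}=\norma{\mu}-\beta+(2\alpha-\beta)+ \dots$, and tracking constants carefully yields $r'=r(\mu)-p+\alpha+t$, matching the argument $\binom{r(\mu)-p+\alpha+t+n-1}{n-1}$ in the stated formula. Substituting and collecting the four summation indices $t,\beta,\alpha$ (with $\eta$ now fully summed out) produces exactly the claimed triple sum; the overall parity/integrality condition $r(\mu)\in\N_0$ is equivalent to $r'\in\N_0$ arising for a contributing $\eta$, and when it fails every term vanishes.

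The main obstacle will be Step~3's contribution bookkeeping: verifying that $\norma{\mu-\eta}$ reduces correctly and that the sign-alignment analysis over nonzero coordinates produces precisely $\binom{\beta}{\alpha}$ with the right shift in $r'$, while simultaneously confirming that the multiplicity genuinely depends only on $\norma{\mu}$ and $\contador(\mu)$ and not on finer features of $\mu$. The cleanest way to guarantee this is to argue that permuting or sign-flipping the coordinates of $\mu$ (which preserves $\norma{\mu}$ and $\contador(\mu)$) sets up a bijection on contributing $\eta$'s preserving both multiplicity factors, so the sum may be computed on a convenient representative such as $\mu=\norma{\mu}\varepsilon_1$ modified to have the prescribed number of zeros. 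I expect Steps~1, 2, and 4 to be routine given the results already in hand, so essentially all the work is in this combinatorial reorganization of \eqref{eq:multiptensor}.
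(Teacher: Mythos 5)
Your proposal is correct and follows essentially the same route as the paper's proof: decompose the weights of $\pi_{\omega_p}$ into classes indexed by $(t,\beta,\alpha)$ according to $\norma{\eta}$ and the placement/signs of its nonzero entries relative to the zero and nonzero coordinates of a dominant representative of $\mu$, count each class as $2^{p-2t-\beta}\binom{n-\contador(\mu)}{\beta}\binom{\beta}{\alpha}\binom{\contador(\mu)}{p-2t-\beta}$, and feed the two extreme formulas into the convolution \eqref{eq:multiptensor}. The only discrepancy is your sign convention for $\alpha$ (your ``aligned'' count gives $r'=r(\mu)-p+t+\beta-\alpha$ rather than $r(\mu)-p+t+\alpha$), which is harmless since the substitution $\alpha\mapsto\beta-\alpha$ preserves $\binom{\beta}{\alpha}$ and yields the stated formula.
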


\begin{proof}
Write $r=r(\mu)$ and $\ell=\contador (\mu)$. 
We may assume that $\mu$ is dominant, thus $\mu=\sum_{j=1}^{n-\ell} a_j\varepsilon_j$ with $a_1\geq \dots \geq a_{n-\ell}>0$ since it has $\ell$ zero-coordinates.
In order to use \eqref{eq:multiptensor}, by Lemma~\ref{lemCn:extreme}, we write the set of weights of $\pi_{\omega_p}$ as 
$$
	\mathcal P(\pi_{\omega_p}) :=
	\bigcup_{t=0}^{\lfloor {p}/{2}\rfloor}
	\;\bigcup_{\beta=0}^{p-2t}
	\;\bigcup_{\alpha=0}^{\beta}
	\;\mathcal P_{t,\beta,\alpha}^{(p)}
$$
	where
\begin{equation}\label{eq:calP}
	\mathcal P_{t,\beta,\alpha}^{(p)} =
	\left\{
	\sum_{h=1}^{p-2t} b_h\varepsilon_{i_h}:
	\begin{array}{l}
	i_1<\dots<i_\beta\leq n-\ell< i_{\beta+1}<\dots<i_{p-2t} \\
	b_j=\pm1\quad \forall j,\quad \#\{1\leq j\leq \beta: b_j=1\}=\alpha
	\end{array}
	\right\}.
\end{equation}
A weight $\eta\in\mathcal P_{t,\beta,\alpha}^{(p)}$ has all entries in  $\{0,\pm 1\}$ and satisfies $\norma{\eta}=p-2t$, thus $m_{\pi_{\omega_p}}(\eta)=\frac{n-p+1}{n-p+t+1}\binom{n-p+2t}{t}$ by \eqref{eqCn:multip(p)}.
It is a simple matter to check that
\begin{equation}\label{eq:card(P)}
\# \mathcal P_{t,\beta,\alpha}^{(p)} = 2^{p-2t-\beta} \binom{n-\ell}{\beta}\binom{\beta}{\alpha} \binom{\ell}{p-2t-\beta}.
\end{equation}

From \eqref{eq:multiptensor}, since the triple union above is disjoint, we obtain that
	\begin{align*}
		m_{\sigma_{k,p}}(\mu)
		&= \sum_{t=0}^{\lfloor {p}/{2}\rfloor}\;\sum_{\beta=0}^{p-2t}\; \sum_{\alpha=0}^{\beta} \; \sum_{\eta\in \mathcal P_{t,\beta,\alpha}^{(p)}}
		m_{\pi_{k\varepsilon_1}}(\mu-\eta) \;m_{\pi_{\omega_p}}(\eta) .
\end{align*}
One has that $\norma{\mu-\eta} = (k+p-2r) +(\beta-\alpha)-\alpha + (p-2t-\beta) = k-2(r+t+\alpha-p)$ for every $\eta \in \mathcal P_{t,\beta,\alpha}^{(p)}$.
If $r\notin \N_{0}$, \eqref{eqCn:multip(k)} forces $m_{\pi_{k\varepsilon_1}}(\mu-\eta)=0$ for all $\eta \in \mathcal P_{t,\beta,\alpha}^{(p)}$, consequently $m_{\sigma_{k,p}}(\mu)=0$.  
Otherwise, 
\begin{align*}
m_{\sigma_{k,p}}(\mu)
		&= \sum_{t=0}^{\lfloor {p}/{2}\rfloor}\;\sum_{\beta=0}^{p-2t}\; \sum_{\alpha=0}^{\beta} \;
		\binom{r+t+\alpha-p+n-1}{n-1} \;\frac{n-p+1}{n-p+t+1}\; \binom{n-p+2t}{t} \; \# \mathcal P_{t,\beta,\alpha}^{(p)}
	\end{align*}
by Lemma~\ref{lemCn:extreme} . 
The proof is complete by \eqref{eq:card(P)}. 
\end{proof}

Theorem~\ref{thmCn:multip(k,p)} follows by replacing the multiplicity formula given in Lemma~\ref{lemCn:multip(sigma_kp)} into \eqref{eq:virtualring(sigma)}.

\section{Type D}\label{secDn:multip(k,p)}

We now consider type $\tipo D_n$, that is, $\mathfrak g=\so(2n,\C)$ and $G=\SO(2n)$. 
We assume that $n\geq2$, so the non-simple case $\mathfrak g=\so(4,\C)\simeq \sll(2,\C)\oplus \sll(2,\C)$ is also considered.

Since $G$ is not simply connected and has a fundamental group of order $2$, the lattice of $G$-integral weights $P(G)$ is strictly included  with index $2$ in the weight space $P(\mathfrak g)$.
Consequently, a dominant weight $\lambda$ in $P(\mathfrak g)\smallsetminus P(G)$ corresponds to a representation $\pi_{\lambda}$ of $\Spin(2n)$, which does not descend to a representation of  $G=\SO(2n)$.

In this case, for all $k\geq0$ and $1\leq p\leq n-2$, we have that 
\begin{align}\label{eqDn:pi_kp}
	\pi_{k,p}	&=\pi_{k \omega_1+\omega_{p}},&
	\pi_{k,n-1} &= \pi_{k\omega_1+\omega_{n-1}+\omega_n},&
	\pi_{k,n} &= \pi_{k\omega_1+2\omega_{n-1}}\oplus \pi_{k\omega_1+2\omega_{n}}.
\end{align}
Each of them descends to a representation of $G$ and its multiplicity formula is established in Theorem~\ref{thmDn:multip(k,p)}.
The remaining cases $\pi_{k \omega_1+\omega_n-1}$ and $\pi_{k \omega_1+\omega_n}$, are spin representations. 
Their multiplicity formulas were obtained in \cite[Lem.~4.2]{BoldtLauret-onenormDirac} and are stated in Theorem~\ref{thmDn:multip(spin)}.

\begin{theorem}\label{thmDn:multip(k,p)}
Let $\mathfrak g=\so(2n,\C)$ and $G=\SO(2n)$ for some $n\geq2$ and let $k\geq0$, $1\leq p\leq n$ integers.
For $\mu\in P(G)$, if $r(\mu):=(k+p-\norma{\mu})/2$ is a non-negative integer, then 
	\begin{align*}
		m_{\pi_{k,p}}(\mu)
		&= \sum_{j=1}^{p} (-1)^{j-1}  \sum_{t=0}^{\lfloor\frac{p-j}{2}\rfloor} \binom{n-p+j+2t}{t}  \sum_{\beta=0}^{p-j-2t} 2^{p-j-2t-\beta} \binom{n-\contador (\mu)}{\beta} \binom{\contador (\mu)}{p-j-2t-\beta}  \\
		&\quad \sum_{\alpha=0}^\beta \binom{\beta}{\alpha} \sum_{i=0}^{j-1} \binom{r(\mu)-i-p+\alpha+t+j+n-2}{n-2},
	\end{align*}
	and $m_{\pi_{k,p}}(\mu)=0$ otherwise.
Furthermore, $m_{\pi_{k,p}}(\mu)=0$ for every $\mu\in P(\mathfrak g)\smallsetminus P(G)$. 
\end{theorem}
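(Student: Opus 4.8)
The plan is to follow, essentially verbatim, the four-step strategy of Section~\ref{sec:strategy} already carried out for type $\tipo C_n$ in the proof of Theorem~\ref{thmCn:multip(k,p)}. Step~1 is type-independent (Lemma~\ref{lem:step1}), and I will realize the building blocks of Step~2 as $\pi_{k\omega_1}\subset\op{Sym}^k(\C^{2n})$ and $\pi_{\widetilde\omega_p}\subset\bigwedge^p(\C^{2n})$, where $\C^{2n}$ is the standard representation of $G=\SO(2n)$ and the diagonal torus acts as in the type $\tipo C_n$ computation. Consequently the weight of a monomial in $\op{Sym}^k(\C^{2n})$ and the weight formula \eqref{eq:weight_exteriorCn} for the basis vectors $w_I=e_{i_1}\wedge\dots\wedge e_{i_p}$ of $\bigwedge^p(\C^{2n})$ are read off exactly as in Lemma~\ref{lemCn:extreme}.

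The only genuinely type-dependent input is Step~2, where two modifications relative to type $\tipo C_n$ occur, both because the $G$-invariant bilinear form on $\C^{2n}$ is now symmetric rather than alternating. First, for $\pi_{k\omega_1}$ one uses the harmonic decomposition $\op{Sym}^k(\C^{2n})\simeq \pi_{k\omega_1}\oplus\op{Sym}^{k-2}(\C^{2n})$ obtained by contracting against the invariant quadratic form; the monomial count still gives $m_{\op{Sym}^k(\C^{2n})}(\mu)=\binom{r(\mu)+n-1}{n-1}$ with $r(\mu)=(k-\norma{\mu})/2$ as in \eqref{eqCn:multip(k)}, and subtracting the $\op{Sym}^{k-2}$ term collapses $\binom{r+n-1}{n-1}-\binom{r+n-2}{n-1}$ to $\binom{r+n-2}{n-2}$ by Pascal's rule. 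This is the source of the lower index $n-2$ in the statement. Second, for $\pi_{\widetilde\omega_p}$ one checks that the contraction $\bigwedge^p(\C^{2n})\to\bigwedge^{p-2}(\C^{2n})$ against a \emph{symmetric} form vanishes identically, so that $\bigwedge^p(\C^{2n})\simeq\pi_{\widetilde\omega_p}$ is already irreducible for $1\leq p\leq n-1$ (see, e.g., \cite{FultonHarris-book}); hence $m_{\pi_{\widetilde\omega_p}}(\mu)=\binom{n-p+2r(\mu)}{r(\mu)}$, with no correction factor, which explains the absence of the prefactor $\frac{n-p+1}{n-p+t+1}$ that appears in type $\tipo C_n$.

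With these two replacements, Steps~3 and~4 become mechanical transcriptions of Lemma~\ref{lemCn:multip(sigma_kp)} and of the final substitution. The set of weights of $\bigwedge^p(\C^{2n})$, its partition into the disjoint families $\mathcal P^{(p)}_{t,\beta,\alpha}$ of \eqref{eq:calP}, their cardinalities \eqref{eq:card(P)}, and the one-norm bookkeeping $\norma{\mu-\eta}=k-2(r+t+\alpha-p)$ all depend only on \eqref{eq:weight_exteriorCn} and therefore carry over unchanged. Thus for $1\leq p<n$ I will obtain the same expression as in Lemma~\ref{lemCn:multip(sigma_kp)} but with $\frac{n-p+1}{n-p+t+1}\binom{n-p+2t}{t}$ replaced by $\binom{n-p+2t}{t}$ and the final binomial $\binom{r-p+\alpha+t+n-1}{n-1}$ replaced by $\binom{r-p+\alpha+t+n-2}{n-2}$. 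Feeding this $m_{\sigma_{k,p}}$ into the virtual-ring identity \eqref{eq:virtualring(sigma)}, in which the summand $\sigma_{k+j-2i,p-j}$ carries the parameter $r(\mu)-i$ in place of $r(\mu)$, yields the displayed formula for every $\mu\in P(G)$.

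It remains to dispose of the final assertion that $m_{\pi_{k,p}}(\mu)=0$ for $\mu\in P(\mathfrak g)\smallsetminus P(G)$. In each of the three cases of \eqref{eqDn:pi_kp} the relevant highest weights $k\omega_1+\omega_p$, $k\omega_1+\omega_{n-1}+\omega_n$, $k\omega_1+2\omega_{n-1}$ and $k\omega_1+2\omega_n$ have integer $\varepsilon_j$-coordinates, hence lie in $P(G)$; therefore $\pi_{k,p}$ descends to a representation of $G=\SO(2n)$, and every weight of a $G$-representation is $G$-integral, so $m_{\pi_{k,p}}(\mu)=0$ whenever $\mu\notin P(G)$. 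As in type $\tipo C_n$, the main obstacle is the combinatorial bookkeeping of Step~3; the only place demanding care specific to type $\tipo D_n$ is the verification of the two structural facts of Step~2 --- that contracting the symmetric form annihilates $\bigwedge^p(\C^{2n})$ but not $\op{Sym}^k(\C^{2n})$ --- which is exactly what shifts the two binomial indices and removes the correction factor.
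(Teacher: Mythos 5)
Your proposal is correct and follows essentially the same route as the paper: Lemma~\ref{lem:step1} for Step~1, the type-$\tipo D_n$ extreme multiplicities (Lemma~\ref{lemDn:extremereps}) for Step~2, the tensor-product computation with the sets $\mathcal P^{(p)}_{t,\beta,\alpha}$ (Lemma~\ref{lemDn:multip(sigma_kp)}) for Step~3, and the final substitution of $\sigma_{k+j-2i,p-j}$ (with parameter $r(\mu)-i$) into \eqref{eq:virtualring(sigma)}. The only deviation is that where the paper simply cites \cite[Lem.~3.2]{LMR-onenorm} for Step~2, you rederive \eqref{eqDn:multip(k)} and \eqref{eqDn:multip(p)} via the harmonic decomposition $\op{Sym}^k(\C^{2n})\simeq\pi_{k\omega_1}\oplus\op{Sym}^{k-2}(\C^{2n})$, Pascal's rule, and the irreducibility of $\bigwedge^p(\C^{2n})$ for $p<n$; this derivation is sound, and your use of it only for $p-j\leq n-1$ in the virtual-ring identity correctly sidesteps the reducible case $p=n$.
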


\begin{theorem}\label{thmDn:multip(spin)} 
Let $\mathfrak g=\so(2n,\C)$ and $G=\SO(2n)$ for some $n\geq2$ and let $k\geq0$ an integer. 
Let $\mu\in P(\mathfrak g)\smallsetminus P(G)$. Write $r(\mu)= k+\frac{n}{2}- \norma{\mu}$, then 
	\begin{align*}
		m_{\pi_{k\omega_1+\omega_{n}}}(\mu)
		&= \begin{cases}
			\binom{r(\mu)+n-2}{n-2} &\text{ if }r(\mu)\geq0 \text{ and } \op{neg}(\mu)\equiv r(\mu)\pmod 2, \\
			0&\text{ otherwise},
		\end{cases} 
		\\
		m_{\pi_{k\omega_1+\omega_{n-1}}}(\mu)
		&= \begin{cases}
			\binom{r(\mu)+n-2}{n-2} &\text{ if }r(\mu)\geq0 \text{ and } \op{neg}(\mu)\equiv r(\mu)+1\pmod 2, \\
			0&\text{ otherwise},
		\end{cases} 
	\end{align*}
	where $\op{neg}(\mu)$ stands for the number of negative entries of $\mu$. 
Furthermore,  $m_{\pi_{k\omega_1+\omega_{n-1}}}(\mu) = m_{\pi_{k\omega_1+\omega_{n}}}(\mu) =0$ for every $\mu\in P(G)$.
\end{theorem}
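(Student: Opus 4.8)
The plan is to realize the two spin representations as spaces of \emph{monogenic spinors} and to exploit a telescoping that this realization produces. Let $S=S^+\oplus S^-$ be the spinor module of $\so(2n,\C)$, with $S^+\simeq\pi_{\omega_n}$ and $S^-\simeq\pi_{\omega_{n-1}}$ the two half-spin representations, and consider the spinor-valued polynomials $\op{Sym}(\C^{2n})\otimes S$. The Fischer (Fock) decomposition for the Dirac operator $D$ writes $\op{Sym}^k(\C^{2n})\otimes S=\bigoplus_{j=0}^{k}\underline x^{\,j}\,\mathcal M_{k-j}$, where $\mathcal M_\ell=\ker D\cap(\op{Sym}^\ell(\C^{2n})\otimes S)$ is the space of degree-$\ell$ monogenic spinors and $\underline x$ is Clifford multiplication by the position vector. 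Since $\underline x$ flips chirality and raises the polynomial degree by one, restricting to a fixed chirality and subtracting the analogous decomposition of $\op{Sym}^{k-1}(\C^{2n})\otimes S^{\mp}$ cancels every lower monogenic piece and leaves
\[
m_{\pi_{k\omega_1+\omega_n}}(\mu)=m_{\op{Sym}^k(\C^{2n})\otimes S^+}(\mu)-m_{\op{Sym}^{k-1}(\C^{2n})\otimes S^-}(\mu),
\]
together with the same identity after interchanging $S^+\leftrightarrow S^-$ and $\omega_n\leftrightarrow\omega_{n-1}$; here $\mathcal M_k^+\simeq\pi_{k\omega_1+\omega_n}$ and $\mathcal M_k^-\simeq\pi_{k\omega_1+\omega_{n-1}}$ when $n\ge 3$, so that $\omega_1=\varepsilon_1$. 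This reduces everything to tensor-product multiplicities, exactly the setting of Step~3 in Section~\ref{sec:strategy}; the degenerate case $n=2$ (where $\so(4)$ is not simple and $\omega_1$ is itself a spin weight) will be treated directly from $\sll(2,\C)\oplus\sll(2,\C)$.

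Next I would evaluate each tensor multiplicity with the branching formula \eqref{eq:multiptensor}, $m_{\op{Sym}^k(\C^{2n})\otimes S^\pm}(\mu)=\sum_{\eta}m_{\op{Sym}^k(\C^{2n})}(\mu-\eta)$, summed over the weights $\eta$ of $S^\pm$, which are multiplicity-free: $\eta=\tfrac12\sum_i\epsilon_i\varepsilon_i$ with $\epsilon_i=\pm1$ and an even (resp.\ odd) number of $\epsilon_i=-1$ for $S^+$ (resp.\ $S^-$). The symmetric-power multiplicity is the monomial count $m_{\op{Sym}^k(\C^{2n})}(\nu)=\binom{(k-\norma{\nu})/2+n-1}{n-1}$ when $(k-\norma{\nu})/2\in\N_0$, exactly as in the proof of Lemma~\ref{lemCn:extreme}. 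The crucial simplification is to classify each $\eta$ by the number $w$ of coordinates where $\epsilon_i$ disagrees with the sign of $a_i$; since $\mu$ is a spin weight it has no zero coordinate, so this is well defined. A direct check gives $\norma{\mu-\eta}=\norma{\mu}-\tfrac{n}{2}+w$, whence $(k-\norma{\mu-\eta})/2=(r(\mu)-w)/2$, and—this is the key point—the parity of the number of $\epsilon_i=-1$ equals $w+\op{neg}(\mu)\bmod 2$ independently of how the disagreements are distributed. Consequently exactly $\binom{n}{w}$ weights $\eta$ of the relevant chirality have $w$ disagreements, the $2^{n-1}$-term convolution collapses to a single sum over $w$, and the two congruences $w\equiv\op{neg}(\mu)$ (chirality) and $w\equiv r(\mu)$ (integrality of $(r(\mu)-w)/2$) must be compatible, forcing the stated parity hypotheses.

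Substituting $w=r(\mu)-2s$, the target multiplicity becomes
\[
m_{\pi_{k\omega_1+\omega_n}}(\mu)=\sum_{s\ge0}\binom{s+n-1}{n-1}\left[\binom{n}{r(\mu)-2s}-\binom{n}{r(\mu)-1-2s}\right],
\]
which I would evaluate with generating functions: $\sum_{s\ge0}\binom{s+n-1}{n-1}x^{2s}=(1-x^2)^{-n}$, while the bracketed differences assemble into $(1-x)(1+x)^n$, so the product equals $(1-x)^{-(n-1)}=\sum_R\binom{R+n-2}{n-2}x^R$; reading off the coefficient of $x^{r(\mu)}$ gives precisely $\binom{r(\mu)+n-2}{n-2}$, with $r(\mu)\ge0$ being exactly the nonemptiness of the sum. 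The companion formula for $\pi_{k\omega_1+\omega_{n-1}}$ is obtained verbatim after exchanging $S^+$ and $S^-$, which flips the chirality congruence and yields the shifted condition $\op{neg}(\mu)\equiv r(\mu)+1$. The vanishing of both multiplicities on $P(G)$ is immediate: every weight of a spin representation lies in the non-trivial coset of $P(\mathfrak g)/P(G)$, hence is not $G$-integral.

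The conceptual input—the monogenic realization, equivalently the chirality-alternating decomposition of $\op{Sym}^k(\C^{2n})\otimes S^\pm$—can be quoted from Clifford analysis or reassembled from $\op{Sym}^k(\C^{2n})=\bigoplus_{j}\pi_{(k-2j)\omega_1}$ together with the fusion rule $\pi_{\ell\omega_1}\otimes S^+\simeq\pi_{\ell\omega_1+\omega_n}\oplus\pi_{(\ell-1)\omega_1+\omega_{n-1}}$, so it is not the real difficulty. The main obstacle is the bookkeeping of the second paragraph: recognizing that, once the spinor weights are grouped by their number $w$ of sign disagreements with $\mu$, both $\norma{\mu-\eta}$ and the chirality parity depend on $w$ alone is precisely what turns the large convolution into the single binomial sum whose generating function telescopes to $(1-x)^{-(n-1)}$. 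I also expect the $n=2$ case to require a short separate verification, since there $\so(4)$ splits and $\omega_1$ is a spin weight.
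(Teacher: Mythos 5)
Your proof is correct, but it follows a route that differs in structure from the one the paper relies on, so a comparison is in order. The paper does not actually prove Theorem~\ref{thmDn:multip(spin)} here: it quotes \cite[Lem.~4.2]{BoldtLauret-onenormDirac}, and the model argument it does spell out (the type $\tipo B_n$ analogue, Theorem~\ref{thmBn:multip(spin)}) is an \emph{induction on $k$}: the fusion rule $\pi_{k\omega_1}\otimes\pi_{\omega_n}\simeq\pi_{k\omega_1+\omega_n}\oplus\pi_{(k-1)\omega_1+\omega_n}$ (in type $\tipo D_n$ the second summand has the opposite chirality) expresses the target multiplicity as a tensor multiplicity minus the inductively known one, and the tensor multiplicity is then evaluated by grouping the spin weights by a sign count and summing via the generating function $\frac{1+z}{(1-z)^n}$. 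You eliminate the induction entirely: your telescoping identity $m_{\pi_{k\omega_1+\omega_n}}(\mu)=m_{\op{Sym}^k(\C^{2n})\otimes S^+}(\mu)-m_{\op{Sym}^{k-1}(\C^{2n})\otimes S^-}(\mu)$ does follow from $\op{Sym}^k(\C^{2n})\simeq\bigoplus_{j\geq0}\pi_{(k-2j)\omega_1}$ together with the fusion rule (I checked that the cancellation is exact, including the degree-zero tail), and it has the advantage of requiring only the elementary multiplicity $m_{\op{Sym}^k(\C^{2n})}(\nu)=\binom{(k-\norma{\nu})/2+n-1}{n-1}$ rather than the harmonic one. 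From that point on, the combinatorial core of the two arguments is the same mechanism in different clothing: your grouping of the $2^{n-1}$ spinor weights by the number $w$ of sign disagreements with $\mu$, with $\norma{\mu-\eta}=\norma{\mu}-\tfrac{n}{2}+w$ and chirality governed by $w+\op{neg}(\mu)\bmod 2$, is the type $\tipo D_n$ counterpart of the paper's grouping by $\ell_1(\eta)$, and your evaluation $(1-x^2)^{-n}\cdot(1-x)(1+x)^n=(1-x)^{-(n-1)}$ plays the role of the paper's generating-function step. What your version buys: it is self-contained (no appeal to \cite{BoldtLauret-onenormDirac}), non-inductive, handles non-dominant $\mu$ directly (which is exactly where the Weyl-invariant quantity $\op{neg}(\mu)\bmod 2$ enters naturally), and produces both chirality formulas symmetrically; what the paper's route buys is brevity once the $k-1$ case is granted. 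Finally, your caution about $n=2$ is well founded, and in fact necessary: there $\omega_1=\frac12(\varepsilon_1-\varepsilon_2)\neq\varepsilon_1$, and with the literal reading the statement fails (for $k=1$, $\pi_{\omega_1+\omega_2}=\pi_{\varepsilon_1}$ is the standard representation, all of whose weights lie in $P(G)$); one must read $k\omega_1$ as $k\varepsilon_1$, as the paper implicitly does, and with that reading your argument, which nowhere uses $n\geq3$, applies verbatim.
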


The proof of Theorem~\ref{thmDn:multip(k,p)} will follow the steps from Section~\ref{sec:strategy}. 
Let us first set the necessary elements introduced in Notation~\ref{notacion}. 
Define
$\mathfrak h=
\left\{
\diag\left(
\left[\begin{smallmatrix}0&\theta_1\\ -\theta_1&0\end{smallmatrix}\right]
, \dots,
\left[\begin{smallmatrix}0&\theta_n\\ -\theta_n&0\end{smallmatrix}\right]
\right):
\theta_i\in\C \;\forall\,i
\right\}
$
and
$
\varepsilon_i\big(\diag\left(
\left[\begin{smallmatrix}0&\theta_1\\ -\theta_1&0\end{smallmatrix}\right]
, \dots,
\left[\begin{smallmatrix}0&\theta_n\\ -\theta_n&0\end{smallmatrix}\right]
\right)\big)=\theta_i
$
for each $1\leq i\leq n$. 
Thus $\Sigma^+(\mathfrak g,\mathfrak h)=\{\varepsilon_i\pm\varepsilon_j: i<j\}$,  
\begin{align*}
P(\mathfrak g) &= \{\textstyle \sum_i a_i\varepsilon_i: a_i\in\Z\,\forall i, \text{ or } a_i-1/2\in\Z\,\forall i\},&
P(G)&=\Z\varepsilon_1\oplus\dots\oplus\Z\varepsilon_{n}, \\
P^{{+}{+}}(\mathfrak g) &=\left\{\textstyle\sum_{i}a_i\varepsilon_i \in P(\mathfrak g) :a_1\geq  \dots\geq a_{n-1}\geq |a_n|\right\},&
P^{{+}{+}}(G)&= P^{{+}{+}}(\mathfrak g)\cap P(G).
\end{align*}
It is now clear that $P(G)$ has index $2$ in $P(\mathfrak g)$.

The multiplicity formulas in type $\tipo D_n$ for the extreme representations in Step~2 are already determined. 
A proof can be found in \cite[Lem.~3.2]{LMR-onenorm}.

\begin{lemma}\label{lemDn:extremereps}
	Let $n\geq2$, $\mathfrak g=\so(2n,\C)$, $G=\SO(2n)$, $k\geq0$ and $1\leq p\leq n$.
	For $\mu=\sum_{j=1}^n a_j\varepsilon_j\in P(G)$, we have that 
	\begin{align}
		m_{\pi_{k\omega_1}}(\mu) = &\;m_{\pi_{k\varepsilon_1}}(\mu) =
		\begin{cases}
			\binom{r(\mu)+n-2}{n-2} & \text{ if }\, r(\mu):=\frac{k-\norma{\mu}}{2} \in\N_0,\\
			0 & \text{ otherwise,}
		\end{cases}
		\label{eqDn:multip(k)} \\
		m_{\pi_{\widetilde \omega_p}}(\mu) =&
		\begin{cases}
			\binom{n-p+2r(\mu)}{r(\mu)} & \text{if }\, r(\mu):=\frac{p-\norma{\mu}}{2}\in \N_{0}  \text{ and }  |a_j|\leq1\;\forall\,j,\\
			0&\text{otherwise.} 
		\end{cases}
		\label{eqDn:multip(p)}
	\end{align}
\end{lemma}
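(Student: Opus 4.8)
The plan is to carry out Step~2 of the strategy by realizing both representations inside tensor spaces built from the standard representation $\C^{2n}$, exactly as in the type $\tipo C_n$ argument of Lemma~\ref{lemCn:extreme}, and then to read off the weight multiplicities by a direct count of monomial and wedge basis vectors. The only structural inputs that differ from type $\tipo C_n$ concern which of the two functors $\op{Sym}^k$ and $\bigwedge^p$ yields an irreducible module. For $\so(2n,\C)$ the invariant bilinear form is symmetric, so it lives in $\op{Sym}^2(\C^{2n})^*$ and produces the harmonic decomposition
\[
\op{Sym}^k(\C^{2n}) \simeq \pi_{k\omega_1}\oplus \op{Sym}^{k-2}(\C^{2n}),
\]
whereas the exterior powers $\bigwedge^p(\C^{2n})$ stay irreducible for $1\le p\le n-1$, with $\bigwedge^p(\C^{2n})\simeq \pi_{\widetilde\omega_p}$. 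This is exactly the reverse of the symplectic situation, where $\op{Sym}^k$ is irreducible and $\bigwedge^p$ splits off a copy of $\bigwedge^{p-2}$.

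For \eqref{eqDn:multip(k)} I would first pass to a weight basis $\{f_{\pm1},\dots,f_{\pm n}\}$ of $\C^{2n}$ with $f_{\pm j}$ of weight $\pm\varepsilon_j$, so that the monomial count becomes literally the one in Lemma~\ref{lemCn:extreme}: a weight $\mu=\sum_j a_j\varepsilon_j\in P(G)$ occurs in $\op{Sym}^k(\C^{2n})$ iff $r:=(k-\norma{\mu})/2\in\N_0$, with multiplicity $\binom{r+n-1}{n-1}$. Subtracting the contribution of $\op{Sym}^{k-2}(\C^{2n})$, whose parameter is $r-1$, gives
\[
m_{\pi_{k\omega_1}}(\mu)=\binom{r+n-1}{n-1}-\binom{r+n-2}{n-1}=\binom{r+n-2}{n-2}
\]
by Pascal's rule, which is \eqref{eqDn:multip(k)}.

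For \eqref{eqDn:multip(p)} I would repeat the exterior-power count of Lemma~\ref{lemCn:extreme} verbatim: indexing the basis of $\bigwedge^p(\C^{2n})$ by the wedges $w_I$ of the $f_{\pm j}$, each $w_I$ has weight with all entries in $\{0,\pm1\}$, and a dominant $\mu$ with $|a_j|\le1$ and $r:=(p-\norma{\mu})/2\in\N_0$ is realized by exactly $\binom{n-p+2r}{r}$ choices of $I$ (fix the $p-2r$ forced indices, then select $r$ of the remaining $n-p+2r$ free paired slots to include in full). Since $\bigwedge^p(\C^{2n})=\pi_{\widetilde\omega_p}$ for $p\le n-1$, no correction term is needed and this count is already the answer.

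The one genuinely delicate point is the top case $p=n$, where $\bigwedge^n(\C^{2n})$ is no longer irreducible but splits as $\pi_{2\omega_{n-1}}\oplus\pi_{2\omega_n}$, which is precisely the reducible module $\pi_{0,n}$ of Definition~\ref{def:pi_kp}. Thus one must read $\pi_{\widetilde\omega_n}$ as this full exterior power (equivalently, as $\pi_{0,n}$) rather than as the single irreducible summand $\pi_{2\omega_n}$; after this identification the same wedge count applies with $p=n$ and produces $\binom{2r}{r}$, and one should check this edge case explicitly against the stated formula. Finally, every weight arising from either realization has integer coordinates, so $m_{\pi_{k\omega_1}}(\mu)=m_{\pi_{\widetilde\omega_p}}(\mu)=0$ automatically for $\mu\in P(\mathfrak g)\smallsetminus P(G)$.
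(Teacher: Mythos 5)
Your proof is correct, but it does not follow the paper's route: the paper gives no proof of this lemma at all, instead citing \cite[Lem.~3.2]{LMR-onenorm}, whereas you supply a self-contained argument by transplanting the paper's own type $\tipo C_n$ proof (Lemma~\ref{lemCn:extreme}) to type $\tipo D_n$. Your version gets the two structural inputs right, and they are exactly where the orthogonal and symplectic cases swap roles: for $\so(2n,\C)$ it is $\op{Sym}^k(\C^{2n})$ that splits off $\op{Sym}^{k-2}(\C^{2n})$ (harmonic decomposition) while $\bigwedge^p(\C^{2n})\simeq\pi_{\widetilde\omega_p}$ is irreducible for $p\leq n-1$, so the monomial count $\binom{r+n-1}{n-1}$ needs the Pascal subtraction to give \eqref{eqDn:multip(k)}, and the wedge count $\binom{n-p+2r}{r}$ needs no correction and gives \eqref{eqDn:multip(p)} directly. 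You also correctly isolate the one genuinely delicate point, $p=n$: there $\bigwedge^n(\C^{2n})\simeq\pi_{2\omega_{n-1}}\oplus\pi_{2\omega_n}=\pi_{0,n}$, and reading $\pi_{\widetilde\omega_n}$ as this full (reducible) exterior power is the only interpretation under which \eqref{eqDn:multip(p)} holds at $p=n$ and under which the identity $\sigma_{k,p}=\pi_{k-1,1}\otimes\pi_{0,p}$ of Lemma~\ref{lem:step1} remains valid; this matches Definition~\ref{def:pi_kp} and the way the paper actually uses the lemma (only $\sigma_{k,q}$ with $q\leq n-1$ enter the recursion). What your approach buys is a proof of the type $\tipo D_n$ lemma in the same elementary, uniform style as the type $\tipo B_n$ and $\tipo C_n$ sections, making the paper independent of the external reference; what the paper's citation buys is brevity. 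The closing remark that all weights of tensor powers of the standard representation are integral, hence nothing lives in $P(\mathfrak g)\smallsetminus P(G)$, is harmless but not required, since the lemma is only stated for $\mu\in P(G)$.
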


\begin{lemma}\label{lemDn:multip(sigma_kp)}
	Let $n\geq2$, $\mathfrak g=\so(2n,\C)$, $G=\SO(2n)$, $k\geq0$, $1\leq p\leq n-1$, and $\mu\in P(G)$. 
	Write $r(\mu)=(k+p-\norma{\mu})/2$.
	If $r(\mu)$ is a non-negative integer, then
	\begin{align*}
		m_{\sigma_{k,p}}(\mu)
		= &\sum_{t=0}^{\lfloor{p}/{2}\rfloor} \binom{n-p+2t}{t}\sum_{\beta=0}^{p-2t} 2^{p-2t-\beta} \binom{n-\contador (\mu)}{\beta} \binom{\contador (\mu)}{p-2t-\beta}\\
		&\qquad \sum_{\alpha=0}^\beta \binom{\beta}{\alpha} \binom{r(\mu) -p+\alpha+t+n-2}{n-2},
	\end{align*}
	and $m_{\sigma_{k,p}}(\mu)=0$ otherwise. 
\end{lemma}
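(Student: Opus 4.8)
The plan is to follow exactly the template established in the proof of Lemma~\ref{lemCn:multip(sigma_kp)} for type $\tipo C_n$, since the two statements are structurally identical and differ only in the numerical factors coming from the extreme-case multiplicity formulas. Concretely, I would compute $m_{\sigma_{k,p}}(\mu)$ directly from the tensor-product multiplicity formula \eqref{eq:multiptensor}, $m_{\sigma_{k,p}}(\mu) = \sum_{\eta} m_{\pi_{k\omega_1}}(\mu-\eta)\, m_{\pi_{\widetilde\omega_p}}(\eta)$, where $\eta$ ranges over the weights of $\pi_{\widetilde\omega_p}$, and where now both $m_{\pi_{k\omega_1}}$ and $m_{\pi_{\widetilde\omega_p}}$ are supplied by Lemma~\ref{lemDn:extremereps} rather than Lemma~\ref{lemCn:extreme}. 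Writing $r=r(\mu)$ and $\ell=\contador(\mu)$, I may assume $\mu$ is dominant by Weyl-group invariance, so $\mu=\sum_{j=1}^{n-\ell} a_j\varepsilon_j$ with $a_1\geq\dots\geq a_{n-\ell}>0$.

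First I would partition the weight set $\mathcal P(\pi_{\widetilde\omega_p})$ into the same disjoint pieces $\mathcal P_{t,\beta,\alpha}^{(p)}$ defined in \eqref{eq:calP}, indexed by $0\leq t\leq\lfloor p/2\rfloor$, $0\leq\beta\leq p-2t$, and $0\leq\alpha\leq\beta$; here $t$ records the number of cancelling pairs, $\beta$ counts the nonzero entries of $\eta$ landing among the first $n-\ell$ coordinates, and $\alpha$ counts how many of those $\beta$ entries equal $+1$. The cardinality count \eqref{eq:card(P)}, namely $\#\mathcal P_{t,\beta,\alpha}^{(p)} = 2^{p-2t-\beta}\binom{n-\ell}{\beta}\binom{\beta}{\alpha}\binom{\ell}{p-2t-\beta}$, carries over verbatim since it depends only on the combinatorics of the index set and not on the Lie type. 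Every $\eta\in\mathcal P_{t,\beta,\alpha}^{(p)}$ has entries in $\{0,\pm1\}$ with $\norma{\eta}=p-2t$, so by \eqref{eqDn:multip(p)} we get $m_{\pi_{\widetilde\omega_p}}(\eta)=\binom{n-p+2t}{t}$, which replaces the factor $\frac{n-p+1}{n-p+t+1}\binom{n-p+2t}{t}$ from the $\tipo C_n$ case.

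Next I would compute $\norma{\mu-\eta}$ for $\eta\in\mathcal P_{t,\beta,\alpha}^{(p)}$, obtaining exactly the same value $\norma{\mu-\eta}=k-2(r+t+\alpha-p)$ as in the $\tipo C_n$ computation, since this arithmetic likewise depends only on the one-norm bookkeeping: the $\beta-\alpha$ entries equal to $-1$ and the $\alpha$ entries equal to $+1$ among the positive coordinates of $\mu$, together with the $p-2t-\beta$ entries on the zero coordinates, adjust the one-norm in the stated way. Consequently, if $r\notin\N_0$ then \eqref{eqDn:multip(k)} forces $m_{\pi_{k\varepsilon_1}}(\mu-\eta)=0$ for all $\eta$ and hence $m_{\sigma_{k,p}}(\mu)=0$; otherwise Lemma~\ref{lemDn:extremereps} gives $m_{\pi_{k\varepsilon_1}}(\mu-\eta)=\binom{r+t+\alpha-p+n-2}{n-2}$, which replaces the $\binom{\cdots+n-1}{n-1}$ appearing in type $\tipo C_n$. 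Assembling the disjoint triple sum over $t,\beta,\alpha$ and substituting \eqref{eq:card(P)} then yields precisely the claimed closed expression.

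I do not expect a genuine obstacle here, since the argument is a faithful transcription of the $\tipo C_n$ proof with two substitutions dictated by Lemma~\ref{lemDn:extremereps}: the factor $\binom{n-p+2t}{t}$ in place of $\frac{n-p+1}{n-p+t+1}\binom{n-p+2t}{t}$, and the binomial index $n-2$ in place of $n-1$ throughout. The only point demanding a little care is confirming that the decomposition of $\mathcal P(\pi_{\widetilde\omega_p})$ into the sets $\mathcal P_{t,\beta,\alpha}^{(p)}$ and the support condition $|a_j|\leq1$ are genuinely type-independent; this holds because for $1\leq p\leq n-1$ the weight $\widetilde\omega_p=\omega_p$ coincides in types $\tipo C_n$ and $\tipo D_n$ and the weight set of $\pi_{\omega_p}$ realized inside $\bigwedge^p(\C^{2n})$ has the same $\{0,\pm1\}$-entry description, so reusing \eqref{eq:calP} and \eqref{eq:card(P)} is legitimate.
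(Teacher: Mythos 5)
Your proposal is correct and takes essentially the same route as the paper's own proof, which likewise transcribes the type $\tipo C_n$ argument verbatim: it assumes $\mu$ dominant, decomposes $\mathcal P(\pi_{\widetilde\omega_p})$ into the sets $\mathcal P_{t,\beta,\alpha}^{(p)}$ of \eqref{eq:calP}, computes $\norma{\mu-\eta}=k-2(r+t+\alpha-p)$, and applies \eqref{eq:multiptensor}, Lemma~\ref{lemDn:extremereps} and \eqref{eq:card(P)}. The two substitutions you identify (the factor $\binom{n-p+2t}{t}$ replacing $\tfrac{n-p+1}{n-p+t+1}\binom{n-p+2t}{t}$, and $n-2$ replacing $n-1$ in the binomial index) are exactly what the paper does.
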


\begin{proof}
We will omit several details in the rest of the proof since it is very similar to the one of Lemma~\ref{lemCn:multip(sigma_kp)}. 
Write $r=(k+p-\norma{\mu})/2$ and $\ell=\contador (\mu)$. 
We assume that $\mu$ is dominant. 
Lemma~\ref{lemDn:extremereps} implies that the set of weights of $\pi_{\widetilde \omega_p}$ is
$
	\mathcal P(\pi_{\widetilde \omega_p}) :=
	\bigcup_{t=0}^{\lfloor {p}/{2}\rfloor}
	\;\bigcup_{\beta=0}^{p-2t}
	\;\bigcup_{\alpha=0}^{\beta}
	\;\mathcal P_{t,\beta,\alpha}^{(p)},
$
with $\mathcal P_{t,\beta,\alpha}^{(p)}$ as in \eqref{eq:calP}.

One has that $\norma{\mu-\eta} = k-2(r+t+\alpha-p)$ for any $\eta\in \mathcal P_{t,\beta,\alpha}^{(p)}$.
Hence, \eqref{eq:multiptensor} and Lemma~\ref{lemDn:extremereps} imply $m_{\sigma_{k,p}}(\mu)=0$ if $r\notin\N_0$ and 
\begin{align*}
	m_{\sigma_{k,p}}(\mu)
		= &\sum_{t=0}^{\lfloor {p}/{2}\rfloor}\;\sum_{\beta=0}^{p-2t}\; \sum_{\alpha=0}^{\beta} \;
		\binom{r+t+\alpha-p+n-2}{n-2} \;\binom{n-p+2t}{t} \; \# \mathcal P_{t,\beta,\alpha}^{(p)}
	\end{align*}
otherwise.  
The proof follows by \eqref{eq:card(P)}. 
\end{proof}

Theorem~\ref{thmDn:multip(k,p)} then follows by substituting in \eqref{eq:virtualring(sigma)} the multiplicity formula in Lemma~\ref{lemDn:multip(sigma_kp)}. 

\begin{remark}
By Definition~\ref{def:pi_kp}, $\pi_{k,n}$ in type $\tipo D_n$ is the only case where $\pi_{k,p}$ is not irreducible. 
We have that $\pi_{k,n}= \pi_{k\omega_1+\widetilde \omega_{n}} \oplus \pi_{k\omega_1+\widetilde \omega_{n}-2\varepsilon_n} = \pi_{k\omega_1+2\omega_{n-1}} \oplus \pi_{k\omega_1+2\omega_{n}}$ for every $k\geq0$. 
One can obtain the corresponding multiplicity formula for each of these irreducible constituents from Theorem~\ref{thmDn:multip(k,p)} by proving the following facts.
	If $\mu\in P(G)$ satisfies $\norma{\mu}=k+n$, then $m_{\pi_{k\omega_1+2\omega_n}}(\mu) =  m_{\pi_{k,n}}(\mu)$ and $m_{\pi_{k\omega_1+2\omega_{n-1}}}(\mu) = 0$ or $m_{\pi_{k\omega_1+2\omega_n}}(\mu) =  0$ and $m_{\pi_{k\omega_1+2\omega_{n-1}}}(\mu) = m_{\pi_{k,n}}(\mu)$ according $\mu$ has an even or odd number of negative entries respectively. 
	Furthermore, if $\mu\in P(G)$ satisfies $\norma{\mu} <k+n$, then $m_{\pi_{k\omega_1+2\omega_n}}(\mu) = m_{\pi_{k\omega_1+2\omega_{n-1}}}(\mu) = {m_{\pi_{k,n}}(\mu)}/{2}$.
\end{remark}

\section{Type B}\label{secBn:multip(k,p)}

We now consider $\mathfrak g=\so(2n+1,\C)$ and $G=\SO(2n+1)$, so $\mathfrak g$ is of type $\tipo B_n$. 
The same observation in the beginning of Section~\ref{secDn:multip(k,p)} is valid in this case.
Namely, a weight in $P^{{+}{+}}(\mathfrak g)  \smallsetminus P^{{+}{+}}(G)$ induces an irreducible representation of $\Spin(2n+1)$ which does not descend to $G$.

For any $k\geq0$ and $1\leq p\leq n-1$, we have that
\begin{align}
\pi_{k,p} &=\pi_{k \omega_1+\omega_{p}},&
\pi_{k,n} &=\pi_{k\omega_1+2\omega_{n}}.
\end{align}
All of them descend to representations of $G$. 
The corresponding multiplicity formula is in Theorem~\ref{thmBn:multip(k,p)} and the remaining case, $\pi_{k\omega_1+\omega_n}$ for $k\geq0$, is considered in  Theorem~\ref{thmBn:multip(spin)}.

\begin{theorem}\label{thmBn:multip(k,p)}
Let $\mathfrak g=\so(2n+1)$, $G=\SO(2n+1)$ for some $n\geq2$ and let $k\geq0$,  $1\leq p\leq n$ integers.
For $\mu\in P(G)$, write $r(\mu)=k+p-\norma{\mu}$, then
	\begin{align*}
		m_{\pi_{k,p}}(\mu)
		= &\sum_{j=1}^{p} (-1)^{j-1}  \sum_{t=0}^{\lfloor\frac{p-j}{2}\rfloor} \binom{n-p+j+2t}{t}  \\
		&\qquad \sum_{\beta=0}^{p-j-2t} 2^{p-j-2t-\beta} \binom{n-\contador (\mu)}{\beta} \binom{\contador (\mu)}{p-j-2t-\beta}  \\
		&\qquad \sum_{\alpha=0}^\beta \binom{\beta}{\alpha} \sum_{i=0}^{j-1} \binom{\lfloor\frac{r(\mu)}{2}\rfloor-i-p+j+\alpha+t+n-1}{n-1}\\
		&+\sum_{j=1}^{p-1} (-1)^{j-1}  \sum_{t=0}^{\lfloor\frac{p-j-1}{2}\rfloor} \binom{n-p+j+2t+1}{t}  \\
		&\qquad \sum_{\beta=0}^{p-j-2t-1} 2^{p-j-2t-\beta-1} \binom{n-\contador (\mu)}{\beta} \binom{\contador (\mu)}{p-j-2t-\beta-1}  \\
		&\qquad \sum_{\alpha=0}^\beta \binom{\beta}{\alpha} \sum_{i=0}^{j-1} \binom{\lfloor\frac{r(\mu)+1}{2}\rfloor -i-p+j+\alpha+t+n-1}{n-1}. 	
	\end{align*}
Furthermore, $m_{\pi_{k,p}}(\mu)=0$ for all $\mu\in  P(\mathfrak g)\smallsetminus P(G)$. 
\end{theorem}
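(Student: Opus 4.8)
The plan is to run the four-step machine of Section~\ref{sec:strategy}, precisely as in the proofs of Theorems~\ref{thmCn:multip(k,p)} and~\ref{thmDn:multip(k,p)}; the one new phenomenon is the extra weight-zero coordinate of the standard representation $\C^{2n+1}$ of $\so(2n+1,\C)$, and tracking its effect is where all the work lies. For Step~2 I would record the extreme multiplicities (Lemma~\ref{lemBn:extremereps}). Realizing $\pi_{k\omega_1}$ as the harmonic polynomials inside $\op{Sym}^k(\C^{2n+1})$ and counting weighted monomials as in Lemma~\ref{lemCn:extreme}, the free variable $x_{2n+1}$ of weight $0$ destroys the parity constraint and gives $m_{\op{Sym}^k}(\mu)=\binom{\lfloor(k-\norma{\mu})/2\rfloor+n}{n}$; subtracting the submodule isomorphic to $\op{Sym}^{k-2}(\C^{2n+1})$ and using Pascal's rule yields $m_{\pi_{k\omega_1}}(\mu)=\binom{\lfloor(k-\norma{\mu})/2\rfloor+n-1}{n-1}$ when $k-\norma{\mu}\geq0$. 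For the other extreme, $\pi_{\widetilde\omega_p}=\bigwedge^p(\C^{2n+1})$ is irreducible for $1\leq p\leq n$, and the same combinatorics as in Lemma~\ref{lemCn:extreme} — now letting the index $2n+1$ lie in or out of $I$ — shows that a weight $\mu$ with all $|a_j|\leq1$ occurs iff $\norma{\mu}\leq p$, with multiplicity $\binom{n-\norma{\mu}}{\lfloor(p-\norma{\mu})/2\rfloor}$ and no parity restriction on $p-\norma{\mu}$.

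For Step~3 I would compute $m_{\sigma_{k,p}}(\mu)$ from the tensor identity~\eqref{eq:multiptensor} by summing $m_{\pi_{k\omega_1}}(\mu-\eta)\,m_{\pi_{\widetilde\omega_p}}(\eta)$ over the weights $\eta$ of $\pi_{\widetilde\omega_p}$, mimicking Lemma~\ref{lemCn:multip(sigma_kp)}. The decisive difference from types $\tipo C_n$ and $\tipo D_n$ is that these weights now occur with both one-norm parities, $\norma{\eta}=p-2t$ and $\norma{\eta}=p-2t-1$, according as the middle coordinate is unused or used. I would therefore split the weight set into two families, each refined by the triples $(t,\beta,\alpha)$ exactly as in~\eqref{eq:calP}, reusing the cardinality~\eqref{eq:card(P)} (with $p-2t$ replaced by $p-2t-1$ in the odd family). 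Taking $\mu$ dominant with $\ell=\contador(\mu)$, a direct count gives $\norma{\mu-\eta}=\norma{\mu}+p-2t-2\alpha$ in the even family and $\norma{\mu}+p-1-2t-2\alpha$ in the odd one, whence $\lfloor(k-\norma{\mu-\eta})/2\rfloor$ equals $\lfloor r(\mu)/2\rfloor-p+t+\alpha$ and $\lfloor(r(\mu)+1)/2\rfloor-p+t+\alpha$ respectively, with $r(\mu)=k+p-\norma{\mu}$. Feeding these into the formula for $m_{\pi_{k\omega_1}}$ produces the two binomials $\binom{\lfloor r(\mu)/2\rfloor-p+t+\alpha+n-1}{n-1}$ and $\binom{\lfloor(r(\mu)+1)/2\rfloor-p+t+\alpha+n-1}{n-1}$, and hence the two-sum shape of the answer.

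For Step~4 I would substitute this $\sigma_{k,p}$ formula into the virtual-ring identity~\eqref{eq:virtualring(sigma)} of Lemma~\ref{lem:step1}, namely $\pi_{k,p}=\sum_{j=1}^{p}(-1)^{j-1}\sum_{i=0}^{j-1}\sigma_{k+j-2i,\,p-j}$. Replacing $(k,p)$ by $(k+j-2i,p-j)$ sends $r(\mu)$ to $r(\mu)-2i$, so $\lfloor r/2\rfloor\mapsto\lfloor r(\mu)/2\rfloor-i$ and $\lfloor(r+1)/2\rfloor\mapsto\lfloor(r(\mu)+1)/2\rfloor-i$, while $p\mapsto p-j$ throughout the binomials; the odd family drops out when $j=p$ (its one-norm would be negative), which is exactly why the second sum runs only to $p-1$. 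Collecting terms reproduces Theorem~\ref{thmBn:multip(k,p)}. The final vanishing assertion is immediate: the highest weight $k\omega_1+\widetilde\omega_p$ is integral, so every weight of $\pi_{k,p}$ lies in $P(G)=\Z\varepsilon_1\oplus\dots\oplus\Z\varepsilon_n$, whereas $P(\mathfrak g)\smallsetminus P(G)$ consists of the half-integral weights.

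The main obstacle is precisely the bookkeeping created by the middle zero weight. It doubles every weight set into an even and an odd family, replaces the half-integer $r(\mu)$ of types $\tipo C_n$ and $\tipo D_n$ by the two floor expressions $\lfloor r(\mu)/2\rfloor$ and $\lfloor(r(\mu)+1)/2\rfloor$, and thereby turns the single sum of Lemma~\ref{lemCn:multip(sigma_kp)} into the two sums of the theorem. Keeping the floor operations consistent through the evaluation of $\norma{\mu-\eta}$ and through the shift $k\mapsto k+j-2i$ is the only delicate point; everything else is a routine transcription of the type~$\tipo C_n$ argument.
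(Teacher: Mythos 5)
Your proposal is correct and follows essentially the same route as the paper: the same extreme-case formulas of Lemma~\ref{lemBn:extremereps} (your $\binom{n-\norma{\mu}}{\lfloor(p-\norma{\mu})/2\rfloor}$ is identical to the paper's $\binom{n-p+r(\mu)}{\lfloor r(\mu)/2\rfloor}$), the same split of the weights of $\bigwedge^p(\C^{2n+1})$ into the two families of one-norm $p-2t$ and $p-1-2t$ as in Lemma~\ref{lemBn:multip(sigma_kp)}, and the same substitution into \eqref{eq:virtualring(sigma)} with $r(\mu)\mapsto r(\mu)-2i$, $p\mapsto p-j$. Your accounting of the floor expressions and of why the odd family is empty for $j=p$ (so the second sum stops at $j=p-1$) agrees with the paper's argument.
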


\begin{remark}
Notice that, in Theorem~\ref{thmBn:multip(k,p)}, $m_{\pi_{k,p}}(\mu)=0$ if $r(\mu)<0$ because of the convention $\binom{b}{a}=0$ if $b<a$. 
\end{remark}

We will omit most of details since this case is very similar to the previous ones, specially to type $\tipo D_n$.
According to Notation~\ref{notacion}, we set 
$\mathfrak h=
\left\{
\diag\left(
\left[\begin{smallmatrix}0&\theta_1\\ -\theta_1&0\end{smallmatrix}\right]
, \dots,
\left[\begin{smallmatrix}0&\theta_n\\ -\theta_n&0\end{smallmatrix}\right],0
\right):
\theta_i\in\C \;\forall\,i
\right\}$, 
$
\varepsilon_i\big(\diag\left(
\left[\begin{smallmatrix}0&\theta_1\\ -\theta_1&0\end{smallmatrix}\right]
, \dots,
\left[\begin{smallmatrix}0&\theta_n\\ -\theta_n&0\end{smallmatrix}\right],0
\right)\big)=\theta_i
$
for each $1\leq i\leq n$, $\Sigma^+(\mathfrak g,\mathfrak h)=\{\varepsilon_i\pm\varepsilon_j: i<j\}\cup\{\varepsilon_i\}$, 
\begin{align*}
	P(\mathfrak g) &= \{\textstyle \sum_i a_i\varepsilon_i: a_i\in\Z\,\forall i, \text{ or } a_i-1/2\in\Z\,\forall i\},&
	P(G)&=\Z\varepsilon_1\oplus\dots\oplus\Z\varepsilon_{n}, \\
	P^{{+}{+}}(\mathfrak g) &=\left\{\textstyle\sum_{i}a_i\varepsilon_i \in P(\mathfrak g) :a_1\geq a_{2}\geq \dots\geq a_{n}\geq0\right\},&
	P^{{+}{+}}(G)&= P^{{+}{+}}(\mathfrak g)\cap P(G).
\end{align*}
It is well known that (see \cite[Exercises~IV.10 and V.8]{Knapp-book-beyond})
\begin{align}\label{eqBn:extremereps}
\op{Sym}^k(\C^{2n+1}) &\simeq \pi_{k\omega_1}\oplus \op{Sym}^{k-2}(\C^{2n+1}),&
\pi_{\widetilde \omega_p}& \simeq \textstyle \bigwedge^p(\C^{2n+1}), 
\end{align}
where $\C^{2n+1}$ denotes the standard representation of $\mathfrak g$. 
Actually, $\pi_{k\omega_1}$ can be realized inside $\op{Sym}^k(\C^{2n+1})$ as the subspace of harmonic homogeneous polynomials of degree $k$.

\begin{lemma}\label{lemBn:extremereps}
	Let $n\geq2$, $\mathfrak g=\so(2n+1,\C)$, $G=\SO(2n+1)$, $k\geq0$ and $1\leq p\leq n$.
	For $\mu=\sum_{j=1}^n a_j\varepsilon_j\in P(G)$, we have that
\begin{align}
m_{\pi_{k\omega_1}}(\mu) = &\;m_{\pi_{k\varepsilon_1}}(\mu) =
\tbinom{r(\mu)+n-1}{n-1}  \quad \text{ where } r(\mu)=\lfloor \tfrac{k-\norma{\mu}}{2}\rfloor, 		
		\label{eqBn:multip(k)} \\
m_{\pi_{\widetilde \omega_p}}(\mu) =&
\begin{cases}
\binom{n-p+r(\mu)}{\lfloor {r(\mu)}/{2}\rfloor} & \text{if }\, |a_j|\leq1\;\forall\,j, \\
0&\text{otherwise,} 
\end{cases}\qquad\text{where $r(\mu)=p-\norma{\mu}$}. 
		\label{eqBn:multip(p)}
	\end{align}
\end{lemma}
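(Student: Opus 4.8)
The plan is to follow the same route as in the proof of Lemma~\ref{lemCn:extreme}, exploiting the realizations in \eqref{eqBn:extremereps}. The new feature in type $\tipo B_n$ is the extra zero weight of the standard representation: since $\dim\C^{2n+1}=2n+1$ is odd, the weights of $\C^{2n+1}$ are $\varepsilon_1,-\varepsilon_1,\dots,\varepsilon_n,-\varepsilon_n$ together with a single occurrence of the zero weight. This additional weight vector is precisely what produces the floor functions in both formulas and removes the parity constraints present in type $\tipo C_n$.

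For \eqref{eqBn:multip(k)}, first I would compute the multiplicities in $\op{Sym}^k(\C^{2n+1})$. Fixing a weight basis $\{v_1,\dots,v_n,v_1',\dots,v_n',v_0\}$ of $\C^{2n+1}$ with $v_j$, $v_j'$, $v_0$ of weights $\varepsilon_j$, $-\varepsilon_j$, $0$ respectively, a monomial $v_1^{k_1}\cdots v_n^{k_n}(v_1')^{l_1}\cdots(v_n')^{l_n}v_0^{m}$ of degree $k$ is a weight vector of weight $\sum_j(k_j-l_j)\varepsilon_j$. As in Lemma~\ref{lemCn:extreme}, writing $c_j=\min(k_j,l_j)$ and $s=\sum_j c_j$ gives $m=k-\norma{\mu}-2s$, so the multiplicity of $\mu$ equals $\sum_{s=0}^{\lfloor(k-\norma{\mu})/2\rfloor}\binom{s+n-1}{n-1}$; note that $m$ now absorbs the leftover, so \emph{no} parity hypothesis on $k-\norma{\mu}$ is needed. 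The hockey-stick identity collapses this to $\binom{\lfloor(k-\norma{\mu})/2\rfloor+n}{n}$. Then the decomposition $\op{Sym}^k(\C^{2n+1})\simeq \pi_{k\omega_1}\oplus\op{Sym}^{k-2}(\C^{2n+1})$ yields $m_{\pi_{k\omega_1}}(\mu)$ as a difference of two such terms, whose floor arguments differ by $1$, and a single Pascal step produces $\binom{r(\mu)+n-1}{n-1}$ with $r(\mu)=\lfloor(k-\norma{\mu})/2\rfloor$.

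For \eqref{eqBn:multip(p)}, I would argue directly on $\bigwedge^p(\C^{2n+1})\simeq \pi_{\widetilde\omega_p}$ using the basis $w_I$ indexed by the $p$-element subsets $I$ of the weight basis. The weight of $w_I$ has $j$-th coordinate $+1$, $-1$, or $0$ according as exactly $v_j\in I$, exactly $v_j'\in I$, or both/neither lie in $I$; and $v_0$ may or may not belong to $I$ without affecting the weight. Hence $\mu$ can occur only if $|a_j|\leq1$ for all $j$, in which case the coordinates with $a_j=\pm1$ force $\norma{\mu}$ elements of $I$, and the remaining $r(\mu)=p-\norma{\mu}$ elements must be filled by \emph{pairs} $\{v_j,v_j'\}$ over the $\contador(\mu)=n-\norma{\mu}$ zero-coordinates (each contributing $2$) together with the optional inclusion of $v_0$ (contributing $1$). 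Solving $2t+\delta=r(\mu)$ with $\delta\in\{0,1\}$ forces $t=\lfloor r(\mu)/2\rfloor$ uniquely, so the count is $\binom{\contador(\mu)}{\lfloor r(\mu)/2\rfloor}$; substituting $\contador(\mu)=n-p+r(\mu)$ gives \eqref{eqBn:multip(p)}.

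The computations are elementary; the only point requiring care is the bookkeeping of the single zero-weight vector $v_0$, which must be allowed to appear in both the symmetric and the exterior computations and is exactly responsible for the floor functions, and, in the exterior case, for the disappearance of any parity or congruence condition, in contrast with the spin formulas of Theorem~\ref{thmBn:multip(spin)}. I would also record that both formulas return $0$ when $r(\mu)<0$ by the convention $\binom{b}{a}=0$ for $a<0$, consistently with the hypothesis $\mu\in P(G)$.
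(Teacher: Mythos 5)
Your proof is correct and takes essentially the same route as the paper's: both realize $\pi_{k\omega_1}$ inside $\op{Sym}^k(\C^{2n+1})$ via \eqref{eqBn:extremereps}, count weight monomials (the paper uses the explicit weight basis $f_j=x_{2j-1}+\mi x_{2j}$, $g_j=x_{2j-1}-\mi x_{2j}$, $x_{2n+1}$ where you posit one abstractly), collapse the sum by the hockey-stick identity and subtract via Pascal's rule, and then count $p$-element subsets in $\bigwedge^p(\C^{2n+1})\simeq\pi_{\widetilde\omega_p}$. Your unified bookkeeping $2t+\delta=r(\mu)$, $\delta\in\{0,1\}$, for the optional zero-weight vector is just a compact rewriting of the paper's parity case split, so there is no substantive difference in method.
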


\begin{proof} 
Let $\mathcal P_k$ be the space of complex homogeneous polynomials of degree $k$ in the variables $x_1,\dots,x_{2n+1}$. 
Set $f_j=x_{2j-1}+ix_{2j}$ and $g_{j}= x_{2j-1}-ix_{2j}$ for $1\leq j\leq n$.
One can check that the polynomials 
$
f_1^{k_1}\dots f_n^{k_n} g_1^{l_1}\dots g_{n}^{l_{n}}x_{2n+1}^{k_0}
$ 
with $k_0,\dots,k_n,l_1,\dots,l_n$ non-negative integers satisfying that $\sum_{j=0}^{n} k_j+\sum_{j=1}^{n} l_j=k$ form a basis of $\mathcal P_k$ given by weight vectors, each of them of weight $\mu=\sum_{j=1}^n (k_j-l_{j})\varepsilon_j$.
Notice that the number $k_0$ does not take part of $\mu$.

Consequently, $m_{\pi_{\mathcal P_k}}(\mu)$ for $\mu=\sum_{j=1}^n a_j\varepsilon_j$ is the number of tuples $(k_0,\dots,k_{n}, l_1,\dots,l_{n})\in \N_0^{2n+1}$ satisfying that $a_j=k_j-l_{j}$ for all $1\leq j\leq n$ and 
\begin{equation}\label{eqBn:conditionweightP_k}
\sum_{j=0}^{n} k_j+\sum_{j=1}^{n} l_j=k.
\end{equation}
Note that \eqref{eqBn:conditionweightP_k} implies $k-\norma{\mu}-k_0=2s$ for some integer $s\geq0$.

We fix an integer $s$ satisfying $0\leq s\leq r:=\lfloor (k-\norma{\mu})/2\rfloor $.
Set $k_0=k-\norma{\mu}-2s\geq0$. 
As in the proof of Lemma~\ref{lemCn:extreme}, the number of $(k_1,\dots,k_n,l_1,\dots,l_n)\in \N_0^{2n}$ satisfying that $a_j=k_j-l_j$ for all $1\leq j\leq n$ and \eqref{eqBn:conditionweightP_k} is equal to $\binom{s+n-1}{n-1}$. 
Hence,  
\begin{equation*}
	m_{\mathcal P_k}(\mu) = \sum_{s=0}^{r} 
	\binom{s+n-1}{n-1}=  \binom{r+n}{n}.
\end{equation*}
The second equality is well known. 
It may be proven by showing that both sides are the $r$-term of the generating function $(1-z)^{-(n+1)}$. 
From \eqref{eqBn:extremereps} we conclude that $m_{\pi_{k\varepsilon_1}}(\mu) = m_{{\mathcal P}_k}(\mu) - m_{{\mathcal P}_{k-2}}(\mu) = \binom{r+n}{n}- \binom{r-1+n}{n} = \binom{r+n-1}{n-1}$.

We have that $\pi_{\widetilde\omega_p}\simeq \bigwedge^p(\C^{2n+1})$ by \eqref{eqBn:extremereps}.
By setting $v_j=e_{2j-1}-i e_{2j}$, $v_{j+n}=e_{2j-1}+i e_{2j}$ and $v_{2n+1}=e_{2n+1}$,
one obtains that the vectors $w_I:=v_{i_1}\wedge \dots\wedge v_{i_p}$ for $I=\{i_1,\dots,i_p\}$ satisfying $1\leq i_1<\dots<i_p\leq 2n+1$, 
form a basis of $\bigwedge^p(\C^{2n+1})$. Furthermore, $w_I$ is a weight vector of weight 
$\mu=\sum_{j=1}^n a_j\varepsilon_j$ given by \eqref{eq:weight_exteriorCn}.
Note that the condition of $2n+1$ being or not in $I$ does not influence on $\mu$. 

Hence, $\mu=\sum_j a_j\varepsilon_j$ is a weight of $\bigwedge^p(\C^{2n+1})$ if and only if $|a_j|\leq 1$ for all $j$ and $p-\norma{\mu}\geq0$.
Proceeding as in Lemma~\ref{lemCn:extreme}, by writing  $s=\lfloor \frac{p-\norma{\mu}}{2} \rfloor\geq0$, the multiplicity of $\mu$ is $\binom{n-p+2s}{s}$ if $p-\norma{\mu}$ is even and $\binom{n-p+2s+1}{s}$ if $p-\norma{\mu}$ is odd.
\end{proof}

\begin{theorem}\label{thmBn:multip(spin)}
	Let $\mathfrak g=\so(2n+1,\C)$ and $G=\SO(2n+1)$ for some $n\geq2$ and let $k\geq0$ an integer. 
	Let $\mu\in P(\mathfrak g)\smallsetminus P(G)$. Write $r(\mu)=k+\frac{n}{2}-\norma{\mu}$, then 
	\begin{equation}\label{eqBn:multip(spin)}
	m_{\pi_{k\omega_1+\omega_n}}(\mu) =\binom{r(\mu)+n-1}{n-1}.
	\end{equation}
	Furthermore, $m_{k\omega_1+\omega_n}(\mu)=0$ for all $\mu\in P(G)$. 
\end{theorem}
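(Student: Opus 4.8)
The plan is to realize $\pi_{k\omega_1+\omega_n}$ through the minuscule spin representation $\pi_{\omega_n}$ and thereby reduce everything to the symmetric powers already understood in the proof of Lemma~\ref{lemBn:extremereps}. Since $\pi_{\omega_n}$ is minuscule (its $2^n$ weights are exactly $\tfrac12(\pm\varepsilon_1\pm\dots\pm\varepsilon_n)$, each of multiplicity one and forming a single Weyl orbit), the tensor product rule for a minuscule factor gives $\pi_{k\omega_1}\otimes\pi_{\omega_n}\simeq\pi_{k\omega_1+\omega_n}\oplus\pi_{(k-1)\omega_1+\omega_n}$ for $k\geq1$ (and $\pi_{\omega_n}$ for $k=0$): among the weights $\tfrac12\sum_j\delta_j\varepsilon_j$, only $\delta=(+,\dots,+)$ and $\delta=(-,+,\dots,+)$ make $k\varepsilon_1+\tfrac12\sum_j\delta_j\varepsilon_j$ dominant. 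Combining this with $\op{Sym}^k(\C^{2n+1})\simeq\bigoplus_{0\le j\le k,\,j\equiv k\,(2)}\pi_{j\omega_1}$ from \eqref{eqBn:extremereps}, I would obtain the clean decomposition $\op{Sym}^k(\C^{2n+1})\otimes\pi_{\omega_n}\simeq\bigoplus_{j=0}^{k}\pi_{j\omega_1+\omega_n}$, and hence the telescoping identity
\[
m_{\pi_{k\omega_1+\omega_n}}(\mu)=m_{\op{Sym}^k(\C^{2n+1})\otimes\pi_{\omega_n}}(\mu)-m_{\op{Sym}^{k-1}(\C^{2n+1})\otimes\pi_{\omega_n}}(\mu),
\]
with the convention $\op{Sym}^{-1}=0$.

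Next I would evaluate $m_{\op{Sym}^k(\C^{2n+1})\otimes\pi_{\omega_n}}(\mu)$ via \eqref{eq:multiptensor}. By Weyl invariance of both the multiplicities and $\norma{\cdot}$, I may assume $\mu=\sum_j a_j\varepsilon_j$ is dominant; as $\mu\in P(\mathfrak g)\smallsetminus P(G)$, each $a_j$ is a half-integer with $a_1\geq\dots\geq a_n\geq\tfrac12>0$. Each spin weight $\eta_\delta=\tfrac12\sum_j\delta_j\varepsilon_j$ (for $\delta\in\{\pm1\}^n$) has multiplicity one, and $m_{\op{Sym}^k(\C^{2n+1})}(\nu)=\binom{\lfloor(k-\norma{\nu})/2\rfloor+n}{n}$ for integral $\nu$ by the proof of Lemma~\ref{lemBn:extremereps}. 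Since $a_j\geq\tfrac12$, a short computation gives $\norma{\mu-\eta_\delta}=\norma{\mu}-q+\tfrac n2$, where $q=\#\{j:\delta_j=+1\}$. Writing $r=r(\mu)=k+\tfrac n2-\norma{\mu}$ (an integer, as $\norma{\mu}\in\tfrac n2+\Z$), one gets $\lfloor(k-\norma{\mu-\eta_\delta})/2\rfloor=\lfloor(r-n+q)/2\rfloor$, so collecting the $\binom nq$ weights $\eta_\delta$ with a given $q$,
\[
m_{\op{Sym}^k(\C^{2n+1})\otimes\pi_{\omega_n}}(\mu)=\sum_{q=0}^{n}\binom nq\binom{\lfloor(r-n+q)/2\rfloor+n}{n}.
\]

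The genuine work is the ensuing binomial simplification, which I expect to be the main obstacle. Subtracting the same expression with $r$ lowered by one, the bracket collapses to $D(r-n+q)$, where $D(m):=\binom{\lfloor m/2\rfloor+n}{n}-\binom{\lfloor(m-1)/2\rfloor+n}{n}$ vanishes for $m$ odd and equals $\binom{m/2+n-1}{n-1}$ for $m$ even by Pascal's rule; equivalently $D(m)=[y^m](1-y^2)^{-n}$ for all $m\in\Z$. Using $\binom nq=\binom n{n-q}$ to recognize the remaining sum as a convolution, I would then conclude
\[
m_{\pi_{k\omega_1+\omega_n}}(\mu)=\sum_{q=0}^{n}\binom nq\,D(r-n+q)=[y^{r}]\big((1+y)^{n}(1-y^2)^{-n}\big)=[y^{r}](1-y)^{-n}=\binom{r+n-1}{n-1},
\]
which is the asserted formula (equal to $0$ when $r<0$ by the binomial convention). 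The delicate point is precisely this generating-function collapse $(1+y)^n(1-y^2)^{-n}=(1-y)^{-n}$; everything preceding it is bookkeeping.

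Finally, the vanishing on $P(G)$ is immediate: every weight of $\pi_{k\omega_1+\omega_n}$ differs from the half-integral highest weight $k\omega_1+\omega_n$ by a sum of roots, and all roots of type $\tipo B_n$ lie in $P(G)=\Z\varepsilon_1\oplus\dots\oplus\Z\varepsilon_n$; hence every weight is half-integral, so it lies in $P(\mathfrak g)\smallsetminus P(G)$ and $m_{\pi_{k\omega_1+\omega_n}}(\mu)=0$ for all $\mu\in P(G)$.
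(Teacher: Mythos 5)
Your proof is correct and takes essentially the same route as the paper's: both rest on the fusion rule $\pi_{k\omega_1}\otimes\pi_{\omega_n}\simeq\pi_{k\omega_1+\omega_n}\oplus\pi_{(k-1)\omega_1+\omega_n}$ (you derive it from minusculeness, the paper from \cite[Exercise~V.19]{Knapp-book-beyond}), both evaluate the resulting tensor-product multiplicity by summing a symmetric-power formula over the $2^n$ spin weights grouped by their number of positive signs, and both finish with the same kind of binomial convolution handled by generating functions. The only organizational difference is that you telescope the decomposition $\op{Sym}^k(\C^{2n+1})\otimes\pi_{\omega_n}\simeq\bigoplus_{j=0}^{k}\pi_{j\omega_1+\omega_n}$ instead of running the paper's induction on $k$ with the harmonic constituents $\pi_{j\omega_1}$, so your identity $(1+y)^n(1-y^2)^{-n}=(1-y)^{-n}$ plays exactly the role of the paper's identity for the generating function $\tfrac{1+z}{(1-z)^n}$.
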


\begin{proof}
	This proof is very similar to  \cite[Lem.~4.2]{BoldtLauret-onenormDirac}. 
	The assertion $m_{k\omega_1+\omega_n}(\mu)=0$ for every $\mu\in P(G)$ is clear since any weight of $\pi_{k\omega_1+\omega_n}$ is equal to the highest weight $k\omega_1+\omega_n$ minus a sum of positive roots, which clearly lies in $P(\mathfrak g)\smallsetminus P(G)$.

	Let $\mu\in P(\mathfrak g)\smallsetminus P(G)$. 
	We may assume that $\mu$ is dominant, thus $\mu=\frac{1}{2}\sum_{i=1}^n a_i\varepsilon_i$ with $a_1\geq \dots \geq a_n \geq1$ odd integers.
	One has that 
	\begin{align}\label{eq:fusionrule_spin}
		\pi_{k \omega_1}\otimes \pi_{\omega_n} \simeq \pi_{k \omega_1+\omega_n} \oplus \pi_{(k-1) \omega_1+\omega_n}
	\end{align}
	for any $k\geq1$.
	Indeed, it follows immediately by applying the formula in \cite[Exercise~V.19]{Knapp-book-beyond} since in its sum over the weights of $\pi_{\omega_n}$, the only non-zero terms are attained at the weights $\omega_n$ and $\omega_n-\omega_1$. 
	
	It is well known that the set of weights of $\pi_{\omega_n}$ is $\mathcal{P}(\pi_{\omega_n}) :=\{ \frac{1}{2}\sum_{i=1}^n b_i\varepsilon_i: |b_i|=1\}$ and $m_{\pi_{\omega_n}}(\nu)=1$ for all $\nu\in \mathcal{P}(\pi_{\omega_n})$ (see for instance \cite[Exercise V.35]{Knapp-book-beyond}).

	We proceed now to prove \eqref{eqBn:multip(spin)} by induction on $k$. 
	It is clear for $k=0$ by the previous paragraph. 
	Suppose that it holds for $k-1$. 
	By this assumption and \eqref{eq:fusionrule_spin}, we obtain that
	\begin{equation}\label{eqBn:multip(tensorspin)}
	m_{\pi_{k\omega_1+\omega_n}}(\mu)= m_{\pi_{k\omega_1}\otimes \pi_{\omega_n}}(\mu)-m_{\pi_{(k-1)\omega_1+\omega_n}}(\mu) = 
	m_{\pi_{k\omega_1}\otimes \pi_{\omega_n}}(\mu)- \binom{r+n-2}{n-1},
	\end{equation}
	where $r=k+\frac{n}{2}-\norma{\mu}$.
	It only remains to prove that $m_{\pi_{k\omega_1}\otimes \pi_{\omega_n}}(\mu)=\binom{r+n-1}{n-1}+\binom{r+n-2}{n-1}$.
	
	Similarly to \eqref{eq:multiptensor}, we have that $m_{\pi_{k\omega_1}\otimes \pi_{\omega_n}}(\mu)=\sum_{\eta\in\mathcal{P}(\pi_{\omega_n})} m_{\pi_{k\omega_1}}(\mu-\eta)$.
	Since $\mu$ is dominant, for any $\eta=\frac{1}{2}\sum_{i=1}^n b_i\varepsilon_i\in\mathcal{P}(\pi_{\omega_n})$, it follows that
	$$
	\norma{\mu-\eta}=
	\frac{1}{2}\sum_{i=1}^n (a_i-b_i)  = \norma{\mu}+\frac{n}{2}-\ell_1(\eta)=
	k-r+n-\ell_1(\eta),
	$$
	where $\ell_1(\eta)=\#\{1\leq i\leq n: b_i=1\}$.  
	By Lemma \ref{lemBn:extremereps}, $m_{\pi_{k \omega_1}}(\mu-\eta)\neq0$ only if $r +\ell_1(\eta)-n\geq0$.
	For each integer $\ell_1$ satisfying $n-r\leq\ell_1\leq n$, there are $\binom{n}{\ell_1}$ weights $\eta\in \mathcal{P}(\pi_{\omega_n})$ such that $\ell_1(\eta)=\ell_1$. 
	On account of the above remarks, 
	\begin{align}\label{eq:multiptensor_spin}
		m_{\pi_{k\omega_1}\otimes \pi_{\omega_n}}(\mu)=& \sum_{\ell_1=n-r}^{n} \binom{\lfloor \frac{r+\ell_1-n}{2}\rfloor +n-1}{n-1} \binom{n}{\ell_1}= 
		\sum_{j=0}^{r} \binom{\lfloor \frac{r-j}{2}\rfloor +n-1}{n-1} \binom{n}{j}.
	\end{align}
	
	We claim that the last term in \eqref{eq:multiptensor_spin} equals $\binom{r+n-1}{n-1}+\binom{r+n-2}{n-1}$.
	Indeed, a simple verification shows that both numbers are the $r$-th term of the generating function $\frac{1+z}{(1-z)^n}$. 
	From \eqref{eqBn:multip(tensorspin)} and \eqref{eq:multiptensor_spin} we conclude that $m_{\pi_{k\omega_1+\omega_n}}(\mu)= 	\binom{r+n-1}{n-1}$	as asserted. 
\end{proof}

\begin{lemma}\label{lemBn:multip(sigma_kp)}
	Let $n\geq2$, $\mathfrak g=\so(2n+1,\C)$, $G=\SO(2n+1)$, $k\geq0$, $1\leq p<n$, and $\mu\in P(G)$. 
	Write $r(\mu)=k+p-\norma{\mu}$.
	Then
	\begin{align*}
		m_{\sigma_{k,p}}(\mu)
		= &\sum_{t=0}^{\lfloor{p}/{2}\rfloor} \binom{n-p+2t}{t}\sum_{\beta=0}^{p-2t} 2^{p-2t-\beta} \binom{n-\contador (\mu)}{\beta} \binom{\contador (\mu)}{p-2t-\beta}\\
		&\qquad \sum_{\alpha=0}^\beta \binom{\beta}{\alpha} \binom{\lfloor\frac{r(\mu)}{2}\rfloor-p+\alpha+t+n-1}{n-1}\\
		&+\sum_{t=0}^{\lfloor{(p-1)}/{2}\rfloor} \binom{n-p+1+2t}{t}\sum_{\beta=0}^{p-1-2t} 2^{p-1-2t-\beta} \binom{n-\contador (\mu)}{\beta} \binom{\contador (\mu)}{p-1-2t-\beta}\\
		&\qquad \sum_{\alpha=0}^\beta \binom{\beta}{\alpha} \binom{\lfloor\frac{r(\mu)+1}{2}\rfloor-p+\alpha+t+n-1}{n-1}.
	\end{align*}
\end{lemma}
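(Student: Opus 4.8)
The plan is to mirror the proof of Lemma~\ref{lemCn:multip(sigma_kp)}, adapting it to the type $\tipo B_n$ extreme-representation multiplicities recorded in Lemma~\ref{lemBn:extremereps}. The new feature here is the floor functions appearing in \eqref{eqBn:multip(k)} and \eqref{eqBn:multip(p)}: in type $\tipo B_n$ the standard representation is odd-dimensional, so the condition $k-\norma{\mu}\in 2\N_0$ is relaxed to $k-\norma{\mu}\geq0$, and both the $k\omega_1$ and the $\widetilde\omega_p$ multiplicities depend on the parity of the relevant one-norm difference. This parity bookkeeping is exactly what produces the two-block structure (the sum over even versus odd contributions) in the asserted formula.

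First I would assume $\mu$ dominant, write $r=r(\mu)=k+p-\norma{\mu}$ and $\ell=\contador(\mu)$, and decompose the weight set $\mathcal P(\pi_{\widetilde\omega_p})$ into the disjoint union of the sets $\mathcal P^{(p)}_{t,\beta,\alpha}$ from \eqref{eq:calP}, just as in the type $\tipo C_n$ and $\tipo D_n$ arguments. A weight $\eta\in\mathcal P^{(p)}_{t,\beta,\alpha}$ satisfies $\norma{\eta}=p-2t$, so by \eqref{eqBn:multip(p)} its multiplicity in $\pi_{\widetilde\omega_p}$ equals $\binom{n-p+2t}{t}$ when $p-2t$ is even and $\binom{n-p+2t+1}{t}$ when $p-2t$ is odd; this is the origin of the two binomials $\binom{n-p+2t}{t}$ and $\binom{n-p+1+2t}{t}$ in the two blocks. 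Next I would compute $\norma{\mu-\eta}=k-2(r+t+\alpha-p)$ exactly as before, insert this into \eqref{eq:multiptensor}, and apply \eqref{eqBn:multip(k)}. The crucial point is that \eqref{eqBn:multip(k)} now always returns a nonzero binomial $\binom{\lfloor(k-\norma{\mu-\eta})/2\rfloor+n-1}{n-1}$ (with no integrality constraint), and $\lfloor(k-\norma{\mu-\eta})/2\rfloor=\lfloor(r/2)\rfloor-p+\alpha+t+n-1$-type expressions arise, where the floor interacts with the parity of $p$. Combining with the cardinality formula \eqref{eq:card(P)} then yields the stated sum.

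The main obstacle I expect is organizing the parity split cleanly. Since $\norma{\widetilde\omega_p}=p$ and the one-norm drops in steps of $2$ as $t$ varies, the contributions naturally separate according to whether $p-2t$ is even or odd, i.e.\ according to the parity of $p$ itself relative to the running index. One must verify that the terms with $p-2t$ even reassemble into the first block, carrying $\lfloor r(\mu)/2\rfloor$ in the inner binomial, while the terms with $p-2t$ odd reassemble into the second block with index range $0\leq t\leq\lfloor(p-1)/2\rfloor$ and the shifted binomial $\binom{\lfloor(r(\mu)+1)/2\rfloor-p+\alpha+t+n-1}{n-1}$. The identity $\lfloor(k-\norma{\mu-\eta})/2\rfloor=\lfloor r/2\rfloor-p+\alpha+t+n-1$ in the even case and the analogous one with $\lfloor(r+1)/2\rfloor$ in the odd case must be checked directly from $\norma{\mu-\eta}=k-2(r+t+\alpha-p)$ together with $r=k+p-\norma{\mu}$; this is elementary but is where the floor arithmetic has to be handled carefully. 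Once that bookkeeping is in place the proof closes by summing the two families, each weighted by the appropriate value of $m_{\pi_{\widetilde\omega_p}}(\eta)$ and by $\#\mathcal P^{(p)}_{t,\beta,\alpha}$ from \eqref{eq:card(P)}, exactly paralleling Lemma~\ref{lemCn:multip(sigma_kp)}.
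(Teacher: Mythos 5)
Your overall plan (decompose the weight set of $\pi_{\widetilde\omega_p}$, plug into \eqref{eq:multiptensor}, apply Lemma~\ref{lemBn:extremereps} and \eqref{eq:card(P)}) is the right one, but the step where you explain the two-block structure contains a genuine error, and as written your computation cannot produce the stated formula. In type $\tipo B_n$ the weight set of $\pi_{\widetilde\omega_p}\simeq\bigwedge^p(\C^{2n+1})$ is \emph{not} the union of the sets $\mathcal P^{(p)}_{t,\beta,\alpha}$ alone: every $\eta\in\mathcal P^{(p)}_{t,\beta,\alpha}$ has $\norma{\eta}=p-2t$, i.e.\ $p-\norma{\eta}$ even, whereas $\bigwedge^p(\C^{2n+1})$ also has weights with $p-\norma{\eta}$ odd, coming from the $p$-subsets $I$ containing the index $2n+1$ (this is exactly the effect of the zero weight of the odd-dimensional standard representation). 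Your attempted repair via parity rests on a misreading of \eqref{eqBn:multip(p)}: for $\eta\in\mathcal P^{(p)}_{t,\beta,\alpha}$ the parity that enters \eqref{eqBn:multip(p)} is that of $r(\eta)=p-\norma{\eta}=2t$, which is always even, so $m_{\pi_{\widetilde\omega_p}}(\eta)=\binom{n-p+2t}{t}$ for \emph{all} such $\eta$; and the quantity whose parity you invoke, $p-2t$, has the fixed parity of $p$ for every $t$, so your proposed split ``according to whether $p-2t$ is even or odd'' is vacuous --- for each fixed $p$ it would place all terms in a single block (and, when $p$ is odd, with the wrong binomial attached), missing half the weights of $\pi_{\widetilde\omega_p}$ altogether.

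The fix is the decomposition the paper uses: $\mathcal P(\pi_{\widetilde\omega_p})$ is the disjoint union of \emph{two} families, $\bigcup_{t=0}^{\lfloor p/2\rfloor}\bigcup_{\beta}\bigcup_{\alpha}\mathcal P^{(p)}_{t,\beta,\alpha}$ and $\bigcup_{t=0}^{\lfloor (p-1)/2\rfloor}\bigcup_{\beta}\bigcup_{\alpha}\mathcal P^{(p-1)}_{t,\beta,\alpha}$, the second family accounting precisely for the weights with $p-\norma{\eta}$ odd. For $\eta\in\mathcal P^{(p-1)}_{t,\beta,\alpha}$ one has $r(\eta)=p-\norma{\eta}=2t+1$, hence $m_{\pi_{\widetilde\omega_p}}(\eta)=\binom{n-p+1+2t}{t}$ by \eqref{eqBn:multip(p)}, and $\norma{\mu-\eta}=k-r-2(t+\alpha-p)-1$, hence $m_{\pi_{k\omega_1}}(\mu-\eta)=\binom{\lfloor (r+1)/2\rfloor+t+\alpha-p+n-1}{n-1}$ by \eqref{eqBn:multip(k)}; summing over this family with \eqref{eq:card(P)} is what yields the second block of the lemma, while your (correct) computation for the first family yields the first block. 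With this correction the remaining bookkeeping in your proposal goes through.
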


\begin{proof}
Write $r=k+p-\norma{\mu}$ and $\ell=\contador (\mu)$ and assume $\mu$ dominant.
Define $\mathcal P_{t,\beta,\alpha}^{(p)}$ as in \eqref{eq:calP}.
From Lemma~\ref{lemBn:extremereps}, we deduce that the set of weights of $\pi_{\widetilde \omega_p}$ is
	$$
	\mathcal P(\pi_{\widetilde \omega_p}) :=
	\big(\bigcup_{t=0}^{\lfloor {p}/{2}\rfloor}
	\;\bigcup_{\beta=0}^{p-2t}
	\;\bigcup_{\alpha=0}^{\beta}
	\;\mathcal P_{t,\beta,\alpha}^{(p)} \big) \cup 
	\big(\bigcup_{t=0}^{\lfloor {p-1}/{2}\rfloor}
	\;\bigcup_{\beta=0}^{p-1-2t}
	\;\bigcup_{\alpha=0}^{\beta}
	\;\mathcal P_{t,\beta,\alpha}^{(p-1)}\big).
	$$
This fact and \eqref{eq:multiptensor} give 
	\begin{align*}
		m_{\sigma_{k,p}}(\mu)
		= &\sum_{t=0}^{\lfloor {p}/{2}\rfloor}\;\sum_{\beta=0}^{p-2t}\; \sum_{\alpha=0}^{\beta} \;
		\binom{\lfloor \frac{r}{2}\rfloor+t+\alpha-p+n-1}{n-1} \;\binom{n-p+2t}{t} \; \# \mathcal P_{t,\beta,\alpha}^{(p)} \\
		&  +\sum_{t=0}^{\lfloor {(p-1)}/{2}\rfloor}\;\sum_{\beta=0}^{p-1-2t}\; \sum_{\alpha=0}^{\beta} \;
		\binom{\lfloor \frac{r-1}{2}\rfloor+t+\alpha-p+n}{n-1} \;\binom{n-p+1+2t}{t} \; \# \mathcal P_{t,\beta,\alpha}^{(p-1)},
	\end{align*}
since $\norma{\mu-\eta}= k-r-2(t+\alpha-p)$ for all $\eta\in\mathcal P_{t,\beta,\alpha}^{(p)}$ and $\norma{\mu-\eta}= k-r-2(t+\alpha-p)-1$ for all $\eta\in\mathcal P_{t,\beta,\alpha}^{(p-1)}$.
The proof follows by \eqref{eq:card(P)}. 
\end{proof}

Lemmas~\ref{lem:step1} and \ref{lemBn:multip(sigma_kp)} complete the proof of Theorem~\ref{thmBn:multip(k,p)}.

\section{Type A}\label{secAn:multip(k,p)}
Type $\tipo A_n$ is the simplest case to compute the weight multiplicity formula of $\pi_{k,p}$.
Actually, it follows immediately by standard calculations using Young diagrams. 
We include this formula to complete the list of all classical simple Lie algebras. 

We consider in $\mathfrak g=\sll(n+1,\C)$, 
$
\mathfrak h =\{\diag\big(\theta_1,\dots,\theta_{n+1}\big) : \theta_i\in\C\;\forall\, i,\; \sum_{i=1}^{n+1}\theta_i=0\}.
$
We set $\varepsilon_i\big(\diag(\theta_1,\dots,\theta_{n+1})\big)= \theta_i$ for each $1\leq i\leq n+1$.
We will use the conventions of \cite[Lecture~15]{FultonHarris-book}.
Thus 
\begin{equation*}
\mathfrak h^* = \bigoplus_{i=1}^{n+1} \C\varepsilon_i / \langle \textstyle\sum\limits_{i=1}^{n+1}\varepsilon_i=0 \rangle,
\end{equation*}
the set of positive roots is
$\Sigma^+(\mathfrak g,\mathfrak h)=\{\varepsilon_i-\varepsilon_j: 1\leq i<j\leq n+1\}$, and the weight lattice is
$
P(\mathfrak g)=\bigoplus_{i=1}^{n+1} \Z\varepsilon_i / \langle \textstyle\sum\limits_{i=1}^{n+1}\varepsilon_i=0 \rangle.
$
By abuse of notation, we use the same letter $\varepsilon_i$ for the image of $\varepsilon_i$ in $\mathfrak h^*$. 
A weight $\mu=\sum_{i=1}^{n+1}a_i\varepsilon_i$ is dominant if $a_1\geq a_2\geq \dots \geq a_{n+1}$.

The representations having highest weights $\lambda=\sum_{i=1}^{n+1} a_i\varepsilon_i$ and $\mu=\sum_{i=1}^{n+1} b_i\varepsilon_i$ are isomorphic if and only if $a_i-b_i$ is constant, independent of $i$. 
Consequently, we can restrict to those $\lambda=\sum_{i=1}^{n+1} a_i\varepsilon_i$ with $a_{n+1}=0$. 
Then, 
$$
P^{++}(\mathfrak g)= \left\{ \textstyle\sum\limits_{i=1}^n a_i\varepsilon_i\in P(\mathfrak g): a_1\geq a_2\geq \dots \geq a_{n}\geq 0 \right\}.
$$

The corresponding fundamental weights are given by
$\omega_p=\varepsilon_{1} + \dots + \varepsilon_p$ for each $1\leq p\leq n$.

It is well known that, for $\lambda\in P^{++}(\mathfrak g)$ and $\mu$ a weight of $\pi_{\lambda}$, one can assume that $\mu=\sum_{i=1}^{n+1} a_i \varepsilon_i$ with $a_i\in \N_0$ for all $i$ and $\sum_{i=1}^{n+1} a_i=\norma{\lambda}$. 

\begin{theorem}\label{thmAn:multip(k,p)}
Let $\mathfrak g=\sll(n+1,\C)$ for some $n\geq1$ and let $k\geq0$,  $1\leq p\leq n$ integers. 
Let $\mu=\sum_{i=1}^{n+1} a_i \varepsilon_i\in P(\mathfrak g)$ with $a_i\in \N_0$ for all $i$ and $\sum_{i=1}^{n+1} a_i=k+p$. 
If $a_1+a_2+\dots +a_j\leq k+j$ for all $1\leq j\leq p$, then
	\begin{align*}
		m_{\pi_{k\omega_1+\omega_p}}(\mu)
		&= \binom{n- \contador (\mu)}{p-1},
	\end{align*}
	and $m_{\pi_{k,p}}(\mu)=0$ otherwise.
\end{theorem}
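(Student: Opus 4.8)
Theorem~\ref{thmAn:multip(k,p)} (Type $\tipo A_n$ multiplicity formula).

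The plan is to reduce the weight-multiplicity computation for $\pi_{k\omega_1+\omega_p}$ entirely to the combinatorics of semistandard Young tableaux, exploiting the fact that the highest weight $k\omega_1+\omega_p=\widetilde\omega_p+(k\omega_1)$ corresponds to a very simple partition shape. First I would identify the partition $\lambda=(k+1,\underbrace{1,\dots,1}_{p-1})$ associated to $k\omega_1+\omega_p$: indeed, writing the highest weight in the coordinates $a_1\geq\dots\geq a_n\geq 0$ gives $a_1=k+1$, $a_2=\dots=a_p=1$, and $a_{p+1}=\dots=a_n=0$, so the Young diagram is a \emph{hook} with arm of length $k$ and leg of length $p-1$. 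The standard theory (see \cite[Lecture~15]{FultonHarris-book}) then says that $m_{\pi_\lambda}(\mu)$ equals the number of semistandard Young tableaux (SSYT) of shape $\lambda$ and content $\mu=(a_1,\dots,a_{n+1})$, the entries drawn from $\{1,\dots,n+1\}$ with $a_i$ denoting the number of occurrences of $i$. This is the Kostka number $K_{\lambda\mu}$.

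The key steps are then purely enumerative. I would count SSYT of hook shape directly. Such a tableau is determined by choosing which values fill the single column of length $p$ (the first column) and which fill the arm (the remaining boxes of the first row). Because columns must strictly increase and rows weakly increase, the hook shape forces strong structure: the top-left box holds the smallest value appearing, the first column consists of $p$ \emph{distinct} values in increasing order, and the first row (arm) is weakly increasing starting from the top-left entry. I expect the nonvanishing condition $a_1+\dots+a_j\leq k+j$ for all $1\leq j\leq p$ to emerge exactly as the feasibility constraint for packing the content into the hook: it is the dominance-type inequality guaranteeing that one can legally place the required multiplicities without violating strict increase down the first column. Once a valid configuration exists, I would argue that the count reduces to choosing which $p-1$ of the \emph{available} distinct nonzero values occupy the bottom $p-1$ cells of the first column, while the arm absorbs the rest; the number of distinct values available is governed by $n-\contador(\mu)$, the number of nonzero coordinates of $\mu$ (equivalently, the number of distinct symbols actually used, minus the forced top entry), yielding $\binom{n-\contador(\mu)}{p-1}$.

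The main obstacle will be Step rigor in the enumeration: showing that under the stated inequalities the only freedom is the choice of the $p-1$ column entries, i.e.\ that once those are fixed the remainder of the tableau is uniquely (and legally) completable, and conversely that every choice of $p-1$ distinct symbols from the nonzero-content symbols gives a valid SSYT. Concretely, I would verify that placing the smallest used symbol in the corner, filling the rest of the first column with $p-1$ chosen symbols in increasing order, and laying out all remaining symbol-occurrences weakly increasingly along the arm produces a valid SSYT precisely when the partial-sum inequalities hold, and that distinct column-choices give distinct tableaux. The hypotheses $\sum a_i=k+p=\norma{\lambda}$ and $a_i\in\N_0$ encode that $\mu$ lies in the correct weight coset (cf.\ the normalization remark preceding the theorem), so no separate integrality or support check is needed beyond the dominance inequalities. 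I would close by noting that when the feasibility inequalities fail, no legal tableau of content $\mu$ exists, giving $m_{\pi_{k,p}}(\mu)=0$, which completes the proof.
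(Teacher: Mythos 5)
Your proposal is correct and takes essentially the same route as the paper's proof: identify the hook-shaped Young diagram $(k+1,1,\dots,1)$ attached to $k\omega_1+\omega_p$, interpret $m_{\pi_{k\omega_1+\omega_p}}(\mu)$ as the number of semistandard fillings of that hook with content $\mu$, and observe that such a filling is determined by choosing the $p-1$ entries below the (forced) corner of the first column from the nonzero-content symbols, giving $\binom{n-\contador(\mu)}{p-1}$. The only difference is one of emphasis: you make explicit the feasibility role of the partial-sum inequalities (which in fact only serve to make the two clauses of the statement consistent, since whenever they fail the binomial coefficient already vanishes), whereas the paper leaves this point implicit.
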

\begin{proof}
	The Young diagram corresponding to the representation $\pi_{k \omega_1+\omega_p}$ is the diagram with $p$ rows, having all length $1$, excepting the first one which has length $k+1$.
	It is well known that the multiplicity of the weight $\mu$ in this representation is equal to the number of ways one can fill its Young diagram with $a_1$ $1$'s, $a_2$ $2$'s, $\dots$, $a_{n+1}$ $(n+1)$'s, in such a way that the entries in the first row are non-decreasing and those in the first column are strictly increasing (see for instance \cite[\S15.3]{FultonHarris-book}).
	
	Consequently, the multiplicity of $\mu$ is equal to the number of ways of filling the first column. Since the first entry is uniquely determined, one has to choose $p-1$ different numbers for the rest of the entries. Hence, the theorem follows.
\end{proof}

\section{Concluding remarks}\label{sec:conclusions}

For a classical complex Lie algebra $\mathfrak g$, it has been shown a closed explicit formula for the weight multiplicities of a representation in any $p$-fundamental string, namely, any irreducible representation of $\mathfrak g$ having highest weight $k\omega_1+\omega_p$, for some integers $k\geq0$ and $1\leq p\leq n$. 
When $\mathfrak g$ is of type $\tipo A_n$, the proof was quite simple and the corresponding formula could be probably established from a more general result. 
In the authors' best knowledge, the obtained expressions of the weight multiplicities for types $\tipo B_n$, $\tipo C_n$ and $\tipo D_n$ are new, except for small values of $n$, probably $n\leq 3$.

Although the formulas in Theorem~\ref{thmCn:multip(k,p)}, \ref{thmDn:multip(k,p)} and \ref{thmBn:multip(k,p)} (types $\tipo C_n$, $\tipo D_n$ and $\tipo B_n$ respectively) look complicated and long, they are easily handled in practice. 
It is important to note that all sums are over (integer) intervals, without including any sum over partitions or permutations. 
Furthermore, there are only combinatorial numbers in each term. 
Consequently, it is a simple matter to implement them in a computer program, obtaining a very fast algorithm even when the rank $n$ of the Lie algebra is very large.

Moreover, for $p$ and a weight $\mu$ fixed, the formulas become a quasi-polynomial on $k$. 
This fact was already predicted and follows by the Kostant Multiplicity Formula, such as M.~Vergne pointed out to Kumar and Prasad in \cite{KumarPrasad14} (see also \cite{MeinrenkenSjamaar99}, \cite{Bliem10}).

For instance, when $\mathfrak g=\so(2n,\C)$ (type $\tipo D_n$), Theorem~\ref{thmDn:multip(k,p)} ensures that 
\begin{equation}
m_{\pi_{k\omega_1}}(\mu) =
\begin{cases}
\binom{\frac{k-\norma{\mu}}{2}+n-2}{n-2}
	&\text{if $k\geq\norma{\mu}$ and $k\equiv\norma{\mu} \pmod 2$,} \\
0 &\text{otherwise.}
\end{cases}
\end{equation}
Consequently, the generating function encoding the numbers $\{m_{\pi_{k\omega_1}}(\mu):k\geq0\}$ is a rational function.
Indeed, 
\begin{equation}
\sum_{k\geq0} m_{\pi_{k\omega_1}}(\mu) z^k = 
\sum_{k\geq0} m_{\pi_{(2k+\norma{\mu})\omega_1}}(\mu) z^{2k+\norma{\mu}} = \frac{z^{\norma{\mu}}}{(1-z^2)^{n-1}}. 
\end{equation}

From a different point of view, for fixed integers $k$ and $p$, the formulas are quasi-polynomials in the variables $\norma{\mu}$ and $\contador(\mu)$.

We end the article with a summary of past (and possible future) applications of multiplicity formulas in spectral geometry. 
We consider a locally homogeneous space $\Gamma\ba G/K$ with the (induced) standard metric, where $G$ is a compact semisimple Lie group, $K$ is a closed subgroup of $G$ and $\Gamma$ is a finite subgroup of the maximal torus $T$ of $G$. 
When $G=\SO(2n)$, $K=\SO(2n-1)$ and $\Gamma$ is cyclic acting freely on $G/K\simeq S^{2n-1}$, we obtain a \emph{lens space}.

In order to determine explicitly the spectrum of a (natural) differential operator acting on smooth sections of a (natural) vector bundle on $\Gamma\ba G/K$ (e.g.\ Laplace--Beltrami operator, Hodge--Laplace operator on $p$-form, Dirac operator), one has to calculate ---among other things--- numbers of the form $\dim V_\pi^\Gamma$ for $\pi$ in a subset of the unitary dual $\widehat G$ depending on the differential operator.  
Since $\Gamma\subset T$, $\dim V_\pi^\Gamma$ can be computed by counting the $\Gamma$-invariant weights in $\pi$ according to its multiplicity, so the problem is reduced to know $m_\pi(\mu)$.

At the moment, some weight multiplicity formulas have been successfully applied to the problem described above. 
The multiplicity formula for $\pi_{k\omega_1}$ in type $\tipo D_n$ (Lemma~\ref{lemDn:extremereps})  was used by Miatello, Rossetti and the first named author in \cite{LMR-onenorm} to determine the spectrum of the Laplace--Beltrami operator on a lens space. 
Furthermore, Corollary~\ref{cor:depending-one-norm-ceros} for type $\tipo D_n$ was shown in the same article (\cite[Lem.~3.3]{LMR-onenorm}) obtaining a characterization of lens spaces $p$-isospectral for all $p$ (i.e.\  their Hodge--Laplace operators on $p$-forms have the same spectra). 
Later, Boldt and the first named author considered in \cite{BoldtLauret-onenormDirac} the Dirac operator on odd-dimensional spin lens spaces.
In this work, it was obtained and used Theorem~\ref{thmDn:multip(spin)}, namely, the multiplicity formula for type $\tipo D_n$ of the spin representations $\pi_{k\omega_1+\omega_{n-1}}$ and $\pi_{k \omega_1+\omega_{n}}$.  

As a continuation of the study begun in \cite{LMR-onenorm}, Theorem~\ref{thmDn:multip(k,p)} was applied in the preprint \cite{Lauret-pspectralens} to determine explicitly every $p$-spectra of a lens space.
Here, as usual, $p$-spectrum stands for the spectrum of the Hodge--Laplace operator acting on smooth $p$-forms.  
The article \cite{Lauret-pspectralens} was the motivation to write the present paper.

The remaining formulas in the article may be used with the same goal. 
Actually, any application of the formulas for type $\tipo D_n$ can be translated to an analogue application for type $\tipo B_{n-1}$, working in spaces covered by $S^{2n-2}$ in place of $S^{2n-1}$ (cf.\ \cite[\S4]{IkedaTaniguchi78}).
This was partially done in \cite{Lauret-spec0cyclic}, by applying Lemma~\ref{lemBn:extremereps}.
The result extends \cite{LMR-onenorm} (for the Laplace--Beltrami operator) to even-dimensional lens orbifolds. 

A different but feasible application can be done for type $\tipo A_n$.
One may consider the complex projective space $P^n(\C)=\SU(n+1)/\op{S}(\U(n)\times\U(1))$.
However, more general representations must be used.
Indeed, in \cite{Lauret-spec0cyclic} was considered the Laplace--Beltrami operator and the representations involved had highest weights $k(\omega_1+\omega_n)$ for $k\geq0$.

Theorem~\ref{thmCn:multip(k,p)} (type $\tipo C_n$) does not have an immediate application since the spherical representations of the symmetric space $\Sp(n)/(\Sp(n-1)\times\Sp(1))$ have highest weight of the form $k\omega_2$ for $k\geq0$. 
Maddox~\cite{Maddox14} obtained a multiplicity formula for these representations. 
However, this expression it is not explicit enough to be applied in this problem. 
An exception was the case $n=2$, since in \cite{Lauret-spec0cyclic} was applied the closed multiplicity formula in \cite{CaglieroTirao04}. 
It is not know by the authors if there is a closed subgroup $K$ of $G=\Sp(n)$ such that the spherical representations of $G/K$ are $\pi_{k\omega_1}$ for $k\geq0$, that is, 
\begin{equation}
\{\pi\in \widehat G: V_\pi^K\simeq \op{Hom}_K(V_\pi,\C)\neq0\} = \{\pi_{k\omega_1}:k\geq0\}.
\end{equation}
In such a case, Theorem~\ref{thmCn:multip(k,p)} could be used.

\section*{Acknowledgments}
	The authors wish to thank the anonymous referee for carefully reading the article and giving them helpful comments.

\bibliographystyle{plain}

\begin{thebibliography}{BBCV06}

\bibitem[BBCV06]{BaldoniBeckCochetVergne06}
	{\sc M.W. {Baldoni}, M. {Beck}, C. {Cochet}, M. {Vergne}}.
	{\it Volume computation for polytopes and partition functions for classical root systems.}
	Discrete Comput. Geom. \textbf{35}:4 (2006), 551--595. DOI: \href{http://dx.doi.org/10.1007/s00454-006-1234-2} {10.1007/s00454-006-1234-2}.

\bibitem[Bl08]{Bliem08-thesis}
	{\sc T. Bliem}.
	On weight multiplicities of complex simple Lie algebras.
	PhD thesis, Univ. K\"oln, Mathematisch-Naturwissenschaftliche Fakult\"at, 2008.

\bibitem[Bl10]{Bliem10}
	{\sc T. Bliem}.
	{\it Chopped and sliced cones and representations of {K}ac-{M}oody algebras.}
	J. Pure Appl. Algebra \textbf{214}:7 (2016), 1152--1164. DOI: \href{http://dx.doi.org/10.1016/j.jpaa.2009.10.002} {10.1016/j.jpaa.2009.10.002}.

\bibitem[BL17]{BoldtLauret-onenormDirac}
	{\sc S. Boldt, E.A. Lauret}.
	{\it An explicit formula for the Dirac multiplicities on lens spaces.}
	J. Geom. Anal. \textbf{27} (2017), 689--725.
	DOI: \href{http://dx.doi.org/10.1007/s12220-016-9695-x} {10.1007/s12220-016-9695-x}.

\bibitem[CT04]{CaglieroTirao04}
	{\sc L. Cagliero, P. Tirao}.
	{\it A closed formula for weight multiplicities of representations of $\mathrm{Sp}_2(\mathbb C)$.}
	Manuscripta Math. \textbf{115}:4 (2004), 417--426.
	DOI: \href{http://dx.doi.org/10.1007/s00229-004-0499-0} {10.1007/s00229-004-0499-0}.

\bibitem[Ca17]{Cavallin17}
	{\sc M. {Cavallin}}.
	{\it An algorithm for computing weight multiplicities in irreducible modules for complex semisimple Lie algebras.}
	J. Algebra \textbf{471} (2017), 492--510.
	DOI: \href{http://dx.doi.org/10.1016/j.jalgebra.2016.08.044} {10.1016/j.jalgebra.2016.08.044}.

\bibitem[Co05]{Cochet05}
	{\sc C. Cochet}.
	{\it Vector partition function and representation theory.}
	Conference Proceedings on Formal Power Series and Algebraic Combinatorics, Taormina, Italy (2005), 1009--1020.

\bibitem[FGP14]{FernandezGarciaPerelomov2014}
	{\sc J. {Fern\'andez N\'u\~nez}, W. {Garc\'{\i}a Fuertes}, A.M. {Perelomov}}.
	{\it On an approach for computing the generating functions of the characters of simple Lie algebras.}
	J. Phys. A, Math. Theor. \textbf{47}:14 (2014), 091702.
	DOI: \href{http://dx.doi.org/10.1088/1751-8113/47/14/145202} {10.1088/1751-8113/47/14/145202}.
	
\bibitem[FGP15a]{FernandezGarciaPerelomov2015a}
	{\sc J. {Fern\'andez N\'u\~nez}, W. {Garc\'{\i}a Fuertes}, A.M. {Perelomov}}.
	{\it On the generating function of weight multiplicities for the representations of the Lie algebra $\operatorname{C}_{2}$.}
	J. Math. Phys. \textbf{56}:4 (2015), 041702.
	DOI: \href{http://dx.doi.org/10.1063/1.4917054} {10.1063/1.4917054}.
	
\bibitem[FGP15b]{FernandezGarciaPerelomov2015b}
	{\sc J. {Fern\'andez N\'u\~nez}, W. {Garc\'{\i}a Fuertes}, A.M. {Perelomov}}.
	{\it Generating functions and multiplicity formulas: the case of rank two simple Lie algebras.}
	J. Math. Phys. \textbf{56}:9 (2015), 091702.
	DOI: \href{http://dx.doi.org/10.1063/1.4930806} {10.1063/1.4930806}.
	
\bibitem[FGP17]{FernandezGarciaPerelomov2017}
	{\sc J. {Fern\'andez N\'u\~nez}, W. {Garc\'{\i}a Fuertes}, A.M. {Perelomov}}.
	{\it Some results on generating functions for characters and weight multiplicities of the Lie algebra $A_3$.}
	\href{http://arxiv.org/abs/1705.03711}{arXiv:1705.03711} (2017).

\bibitem[Fr54]{Freudenthal54}
	{\sc H. Freudenthal}.
	{\it Zur Berechnung der Charaktere der halbeinfachen Lieschen Gruppen. I, II.}
	Indag. Math. \textbf{57} (1954), 369--376, 487--491.

\bibitem[FH]{FultonHarris-book}
	{\sc W. Fulton, J. Harris}.
	Representation Theory, A first course.
	Springer-Verlag New York, 2004.
	DOI: \href{http://dx.doi.org/10.1007/978-1-4612-0979-9} {10.1007/978-1-4612-0979-9}.

\bibitem[Ha12]{Harris12thesis}
	{\sc P. Harris}.
	Combinatorial problems related to Kostant's weight multiplicity formula.
	PhD thesis, University of Wisconsin-Milwaukee, Milwaukee, WI., 2012.

\bibitem[IT78]{IkedaTaniguchi78}
	{\sc A. Ikeda, Y. Taniguchi}.
	{\it Spectra and eigenforms of the Laplacian on $S^n$ and $P^n(\mathbb C)$}.
	Osaka J. Math. \textbf{15}:3 (1978), 515--546.

\bibitem[Kn]{Knapp-book-beyond}
	{\sc A.W. Knapp}.
	{Lie groups beyond an introduction.}
	{\it Progr. Math.} \textbf{140}.
	Birkh\"auser Boston Inc., 2002.

\bibitem[Ko87]{Koike87}
	{\sc K. Koike}.
	{\it On new multiplicity formulas of weights of representations for the classical groups.}
	J. Algebra \textbf{107} (1987), 512--533.
	DOI: \href{http://dx.doi.org/10.1016/0021-8693(87)90100-1} {10.1016/0021-8693(87)90100-1}.
	
\bibitem[KT87]{KoikeTerada87}
	{\sc K. Koike, I. Terada}.
	{\it Young-diagrammatic methods for the representation theory of the classical groups of type $B\sb n$, $C\sb n$, $D\sb n$.}
	J. Algebra \textbf{107}  (1987), 466--511.
	DOI: \href{http://dx.doi.org/10.1016/0021-8693(87)90099-8}{10.1016/0021-8693(87)90099-8}.

\bibitem[Ko59]{Kostant59}
	{\sc B. Kostant}.
	{\it A formula for the multiplicity of a weight.}
	Trans. Amer. Math. Soc. \textbf{93}:1 (1959), 53--73.
	DOI: \href{http://dx.doi.org/10.2307/1993422} {10.2307/1993422}.

\bibitem[KP14]{KumarPrasad14}
	{\sc S. Kumar, D. Prasad}.
	{\it Dimension of zero weight space: an algebro-geometric approach.}
	J. Algebra \textbf{403} (2014), 324--344. 
	DOI: \href{http://dx.doi.org/10.1016/j.jalgebra.2014.01.006} {10.1016/j.jalgebra.2014.01.006}.

\bibitem[La16a]{Lauret-spec0cyclic}
    {\sc E.A. Lauret}.
    {\it Spectra of orbifolds with cyclic fundamental groups}.
    Ann. Global Anal. Geom. \textbf{50}:1 (2016), 1--28.
    DOI: \href{http://dx.doi.org/10.1007/s10455-016-9498-0} {10.1007/s10455-016-9498-0}.

\bibitem[La16b]{Lauret-pspectralens}
	{\sc E.A. Lauret}.
	{\it The spectrum on $p$-forms of a lens space}.
	\href{http://arxiv.org/abs/1604.02471}{arXiv:1604.02471} (2016).

\bibitem[LMR16]{LMR-onenorm}
	{\sc E.A. Lauret, R.J. Miatello, J.P. Rossetti}.
	{\it Spectra of lens spaces from 1-norm spectra of congruence lattices.}
	Int. Math. Res. Not. IMRN \textbf{2016}:4 (2016), 1054--1089.
	DOI: \href{http://dx.doi.org/10.1093/imrn/rnv159} {10.1093/imrn/rnv159}.

\bibitem[Li95]{Littelmann95}
	{\sc P. Littelmann}.
	{\it Paths and root operators in representation theory.}
	Ann. of Math. (2) \textbf{142}:3 (1995), 499--525.
	DOI: \href{http://dx.doi.org/10.2307/2118553} {10.2307/2118553}.

\bibitem[Lu83]{Lusztig83}
	{\sc G. Lusztig}.
	{\it Singularities, character formulas, and a $q$-analog of weight multiplicities.}
	Ast\'erisque \textbf{101--102} (1983), 208--229.

\bibitem[Ma14]{Maddox14}
	{\sc J. Maddox}.
	{\it An elementary approach to weight multiplicities in bivariate irreducible representations of $Sp(2r)$.}
	Comm. Algebra \textbf{42}:9 (2014), 4094--4101.
	DOI: \href{http://dx.doi.org/10.1080/00927872.2013.804928} {10.1080/00927872.2013.804928}.

\bibitem[MS99]{MeinrenkenSjamaar99}
	{\sc E. Meinrenken, R. Sjamaar}.
	{\it Singular reduction and quantization}.
	Topology \textbf{38}:4 (1999), 699--762.
	DOI: \href{http://dx.doi.org/10.1016/S0040-9383(98)00012-3} {10.1016/S0040-9383(98)00012-3}.

\bibitem[Sa]{Sage}
	{\sc W.A. Stein et al.}
	{\it Sage Mathematics Software (Version 4.3)}.
	The Sage Development Team, 2009, \href{http://www.sagemath.org}{www.sagemath.org}.


\bibitem[Sa00]{Sahi00}
	{\sc S. Sahi}.
	{\it A new formula for weight multiplicities and characters}.
	Duke Math. J. \textbf{101}:1 (2000), 77--84.
	DOI: \href{http://dx.doi.org/10.1215/S0012-7094-00-10113-5} {10.1215/S0012-7094-00-10113-5}.

\bibitem[Sc04]{Schutzer04thesis}
	{\sc W. Sch\"utzer}.
	{\it On some combinatorial aspects of representation theory.}
	PhD thesis, Rutgers The State University of New Jersey, 2004.

\bibitem[Sc12]{Schutzer12}
	{\sc W. Sch\"utzer}.
	{\it A new character formula for Lie algebras and Lie groups.}
	J. Lie Theory \textbf{22}:3 (2012), 817--838.
\end{thebibliography}

\end{document}